\documentclass[a4paper,english]{article}

\usepackage[hyphens]{url} 	%
\usepackage{geometry} 		%
\usepackage[T1]{fontenc}	%
\usepackage[utf8]{inputenc} %
\usepackage{lmodern}		%
\usepackage[english]{babel} %
\usepackage{graphicx}		%
\usepackage[algoruled,english]{algorithm2e}
\SetKwInput{KwInput}{Input}                %
\SetKwInput{KwOutput}{Output}              %

\usepackage{pgfplots}
\usepackage{pgfplotstable}
\pgfplotsset{compat=1.18}
\usepackage{booktabs}
\usepackage{multirow}		%

\usepackage{array} 			%
\usepackage[hypcap=true]{caption}
\usepackage{subcaption} 	%
\usepackage[dvipsnames]{xcolor}		%
\usepackage{float} 		%
\usepackage{flafter}     %
\usepackage[section]{placeins} %
\usepackage{mathtools}		%
\usepackage{amssymb}		%
\usepackage{nicefrac}		%
\usepackage[thmmarks, amsmath, amsthm]{ntheorem} 	%
\usepackage{bm}			%

\usepackage{titling} 		%

\usepackage{fancyhdr} 		%
\usepackage{titlesec} 		%
\titleformat*{\section}{\large\bfseries}
\usepackage{enumitem} 		%

\usepackage{csquotes}		%
\usepackage[style=numeric,backend=biber,giveninits=true,alldates=year,doi=false,url=false,isbn=false]{biblatex}
\DeclareNameAlias{default}{given-family}
\DeclareNameAlias{sortname}{given-family}
\AtEveryBibitem{%
  \ifboolexpr{
    test {\ifentrytype{book}} or
    test {\ifentrytype{incollection}} or
    test {\ifentrytype{inproceedings}}
  }{%
    \clearlist{location}%
    \clearname{editor}%
    \iffieldundef{series}{}{%
      \clearfield{series}%
      \clearfield{volume}%
      \clearfield{number}%
    }%
  }{}%
  \ifentrytype{book}{\clearfield{pagetotal}}{}%
}
\usepackage[hidelinks, pdfstartview=FitB]{hyperref}

\DeclareRobustCommand{\IC}{\mathbb C}

\newcommand{\IN}{\mathbb N}

\newcommand{\IR}{\mathbb R}

\newcommand{\N}{\mathbb N}

\newcommand{\R}{\mathbb R}
\newcommand{\T}{\mathbb T}

\newcommand{\sC}{{\mathcal C}}

\newcommand{\sG}{{\mathcal G}}

\newcommand{\sM}{{\mathcal M}}

\newcommand{\sO}{{\mathcal O}}
\newcommand{\sP}{{\Pi}}

\newcommand{\sX}{{\mathcal X}}

\newcommand{\vc}[1]{\bm{#1}}
\newcommand{\mt}[1]{\bm{#1}}

\newcommand{\cond}{\mathrm{cond}}

\newcommand{\supp}{\operatorname{supp}}
\DeclareMathOperator*{\argmin}{\arg\!\min}

\newcommand{\SO}{\mathrm{SO}}
\renewcommand{\S}[1]{\mathcal{S}^{#1}}
\DeclareMathOperator{\Harm}{Harm}
\NewDocumentCommand{\bandlimFunc}{gg}{%
  \IfNoValueTF{#2}
  {\sP_{#1}(\SO(3))}
  {\sP_{#1}(#2)}
}
\renewcommand{\L}[1]{\mathrm{L}^{#1}}
\DeclarePairedDelimiter{\abs}{|}{|}
\DeclarePairedDelimiter{\braces}{(}{)}
\DeclarePairedDelimiter{\norm}{\lVert}{\rVert}
\DeclarePairedDelimiterX{\scp}[2]{\langle}{\rangle}{\,#1\,,\,#2\,}
\DeclarePairedDelimiterX{\set}[2]{\{}{\}}{
		\ifthenelse {\equal{#2}{} }
				{ \def\temp{}   }													%
				{ \def\temp{\,\delimsize|\, #2} }					%
		 #1 \temp
}												 	%

\newcommand{\complexity}[1]{\mathcal O (#1)}

\newcommand{\ie}{i.e.\ }

\newcommand{\iid}{i.i.d.\ }

\makeatletter %
\let\@@pmod\pmod
\DeclareRobustCommand{\pmod}{\@ifstar\@pmods\@@pmod}
\def\@pmods#1{\mkern4mu({\operator@font mod}\mkern 6mu#1)}
\makeatother

\newcolumntype{C}[1]{>{\centering\arraybackslash}p{#1}}

\newtheorem{theorem}{Theorem}[section]

\newtheorem{remark}[theorem]{Remark}

\usepackage{orcidlink}
\usepackage[nameinlink]{cleveref} %

\hypersetup{
  pdftitle={Fast and Stable Harmonic Approximation from Ill-distributed Data using Moving Least Squares},
  pdfauthor={Ralf Hielscher, Tim Pöschl, Erik Wünsche},
  pdfsubject={Harmonic approximation via moving least squares},
  pdfkeywords={scattered data approximation, moving least squares, harmonic expansion, spherical harmonics, fast Fourier transform}
}

\fussy						%
\usepackage[final]{microtype}	%

\begin{document}

\title{Fast and Stable Harmonic Approximation \\
  from Ill-distributed Data using Moving Least Squares}       

\author{Ralf
  Hielscher\orcidlink{0000-0002-6342-1799}%
  \thanks{ralf.hielscher@math.tu-freiberg.de}~,~%
  Tim Pöschl\orcidlink{0009-0006-8313-9850}%
  \thanks{tim.poeschl@math.tu-freiberg.de}~,~%
  Erik Wünsche\orcidlink{0000-0001-5178-6654}%
  \thanks{erik.wuensche@math.tu-freiberg.de}}

\date{Institute for Applied Analysis, TU Bergakademie Freiberg \\
  September 30, 2026}

\maketitle

\begin{abstract}
  We consider harmonic approximation on the torus $\T^d$, the sphere $\S{d}$,
  and the rotation group $\SO(3)$. While several global approaches are available
  for computing a harmonic expansion from scattered data, their stability and,
  especially, their runtime depend strongly on the geometry of the nodes, in particular
  on a sufficiently small fill distance. Even a single large hole in the data
  may cause instability and long runtimes.
  
  We propose \emph{harmonic approximation via moving least squares} (HAMLS),
  a global harmonic approximation scheme that avoids the ill-conditioned global
  solve and works well in both the overdetermined and the underdetermined
  settings. The main idea is an intermediate transition from the scattered data
  to a quadrature grid, which is realized via moving least squares (MLS). This
  replaces one large global problem by many small local ones that
  can be regularized individually. The global harmonic approximation is then
  obtained via quadrature using a fast Fourier transform. This makes the global
  stage fast, stable and non-iterative. For sufficiently smooth
  functions sampled at well-distributed nodes with small fill distance $h$,
  we bound the resulting $\L2$-error by the error of the harmonic approximation
  obtained from exact values on the quadrature grid, plus a term of order
  $h^{K+1}$, where $K$ is the polynomial degree employed in the MLS step.
  Numerical experiments on $\S{2}$ with ill-distributed nodes show that HAMLS
  achieves errors comparable to those of global least-squares approximation,
  while being significantly faster, especially for larger bandwidths.
\end{abstract} 

\begin{center}
  \begin{minipage}{0.85\textwidth}
    \small
    \noindent\textbf{Keywords} scattered data approximation $\cdot$ moving least
    squares $\cdot$ harmonic expansion $\cdot$ spherical harmonics $\cdot$
    Wigner $D$-functions $\cdot$ fast Fourier transform $\cdot$ quadrature
    $\cdot$ rotation group

    \medskip
    \noindent\textbf{Mathematics Subject Classification (2020)}
    65D15 $\cdot$ 41A25 $\cdot$ 65T50 $\cdot$ 42C10 $\cdot$ 65F20
  \end{minipage}
\end{center}

\section{Introduction}
\label{sec:intro}

Scattered data approximation arises in many fields, such as geophysical data
analysis, spherical signal processing, computer graphics, and materials science.
Consequently, there are a vast number of global approximation schemes, such
as kernel interpolation, wavelet approximation, or harmonic approximation. In
this work, we consider the torus, the sphere and the rotation group,    
\begin{equation}
  \label{eq:torus_sphere_so3}
  \sX \in \set{\T^{d}, \S{d}, \SO(3)}{d \in \N} , 
\end{equation}
where harmonic approximation provides a particularly natural and versatile
framework for global approximation, enabling fast Fourier algorithms, frequency
analysis, and spectral methods for PDEs.

All three global approximation schemes mentioned above rely on assumptions on the
geometric properties of the given nodes
$X = \set{x_{i}}{}_{i=1}^{N} \subset \sX$ in order to guarantee stability, in
particular on the separation distance $q_{X}$ and on the fill distance $h_{X}$,    
\begin{equation*}
  q_{X} \coloneqq \frac 12 \min\limits_{i \neq j}
  \mathrm{d}_{\sX}(x_{i},x_{j}) , \qquad
  h_{X} \coloneqq \sup\limits_{x \in \sX} \min\limits_{x_{i} \in X}
  \mathrm{d}_{\sX}(x,x_{i}) .
\end{equation*}
For kernel interpolation, Ward and coauthors showed that a sufficiently large 
separation distance implies stability, both on $\IR^{d}$
\cite{Narcowich1991, Ball1992} and on the sphere \cite{Narcowich1998}. Similarly, a
sufficiently small fill distance yields stable global approximation via 
wavelets, as shown by Freeden in \cite{Freeden1998, Freeden1998a}. Harmonic 
approximation usually assumes both a small fill distance and a large separation
distance \cite{Graef2008, Keiner2007}, although the latter condition
can be dropped by using adapted weights, such as Voronoi weights, see Gröchenig
\cite{Groechenig1992, Groechenig2020}, Filbir and Themistoclakis
\cite{Filbir2008}, or Mhaskar, Narcowich and Ward \cite{Mhaskar2001}.

In applications, however, the nodes are often
ill-distributed, with a very dense node population in some regions and large
gaps in others. Thus both conditions on $q_{X}$ and $h_{X}$ are violated, and
obtaining the harmonic approximation can be unstable and time-consuming,
especially at large bandwidths.   

In this work, we propose an algorithm for harmonic approximation of prescribed
bandwidth that also works well for ill-distributed nodes and
avoids the stability-dependent runtime of a global iterative solver.
This is achieved via an intermediate
local step where we first compute approximate values on a quadrature grid, which
then give rise to stable and fast global approximation via a Fourier transform.      

In the following, we explain the mentioned aspects and the algorithm in more detail. For a prescribed bandwidth $L \in \N$, we denote by $\Pi_{L}(\sX)$
the space of functions of bandwidth at most $L$ on $\sX$. On $\T^d$, this is
the span of the Fourier modes with frequency indices in $\{-L,\dots,L\}^d$.
On $\S{d}$ and $\SO(3)$, it is the span of the spherical harmonics and Wigner
$D$-functions, respectively, of degree at most $L$. In each case, these are
eigenfunctions of the Laplace-Beltrami operator on $\sX$. Given an orthonormal
basis $\set{\phi_k}{k\in\mathcal I_L}$ of $\Pi_{L}(\sX)$, indexed by a finite
set $\mathcal I_L$, any band-limited $f \in \Pi_{L}(\sX)$ has the form     
\begin{equation*}
  f = \sum_{k\in\mathcal I_L}\hat f_k\phi_k . 
\end{equation*}
For given nodes $X = \set{x_i}{}_{i=1}^N \subset \sX$ and values
$\vc y = (y_{i})_{i=1}^{N} \in \IC^N$, we want to compute harmonic
coefficients $\hat{\vc f}=(\hat f_k)_{k\in\mathcal I_L}$ such that
$f(x_{i}) \approx y_{i}$ for $i = 1,\dots,N$. With the sampling matrix
$\mt F=(\phi_k(x_i))_{i,k}$ and positive diagonal matrices $\mt W$ and $\mt R$,
the corresponding weighted and regularized least-squares problem reads      
\begin{equation}
  \label{eq:LSQR-Problem_Regularized}
  \min_{\hat{\vc f}\in\IC^{\abs{\mathcal I_L}}}
  \norm{\mt W^{1/2}(\mt F\hat{\vc f}-\vc y)}_2^2
  + \alpha \norm{\mt{R}^{\nicefrac 12} \hat{\vc f}}_{2}^{2},
  \qquad \alpha\geq0.
\end{equation}
Here $\mt W$ weights the contribution of each node. Commonly, Voronoi weights
are employed to damp the influence of densely sampled regions. The parameter
$\alpha$ controls the regularization strength, and $\mt R$ allows higher
degrees to be penalized more strongly. In our experiments, its diagonal
entries are $R_{kk}=(1+\lambda_k)^s$ with Sobolev parameter $s=2$, where
$\lambda_k$ are the nonnegative eigenvalues associated with the eigenfunctions
$\phi_{k}$. Regularization, \ie $\alpha>0$, ensures uniqueness if the system is
underdetermined and can help to stabilize the problem if it is ill-conditioned.

For larger bandwidths $L$, this problem can become quite large. We therefore
solve it iteratively using the LSQR algorithm described in
\cite{Paige1982,Bjoerck2024}, which acts directly on the augmented least-squares
formulation and avoids explicitly forming the normal equations associated with
\eqref{eq:LSQR-Problem_Regularized}, which read 
\begin{equation}
  \label{eq:LSQR_normal_equations}
  \braces*{\mt F^H\mt W\mt F+\alpha\mt R} \hat{\vc f}
  = \mt{F}^{H} \mt W \vc{y} .   
\end{equation}
The conditioning of $\mt F^H\mt W\mt F$ is closely linked to the geometry
of the nodes $X$.
If the nodes and weights provide a quadrature rule that integrates every 
product $\phi_k\overline{\phi_{k'}}$, $k,k'\in\mathcal I_L$, exactly, then
orthonormality of the basis implies $\mt F^{H} \mt W \mt F = \mt I$. Thus no
regularization is needed, \ie $\alpha = 0$, and solving
\eqref{eq:LSQR-Problem_Regularized} simplifies to evaluating
$\hat{\vc f} = \mt F^{H} \mt W \vc y$, which does not require an iterative
solver anymore. Moreover, on suitable quadrature grids we can even employ fast
Fourier-based algorithms for computing $\mt F^{H} \mt W \vc y$.    

For general node sets, Marcinkiewicz-Zygmund (MZ) inequalities compare the
continuous and weighted discrete norms, which allows us to control the
conditioning of $\mt F^H\mt W\mt F$~\cite{Filbir2011}. On $\S{2}$, Voronoi
weights give such bounds when the fill distance $h_X$ is sufficiently small
compared to $1/L$~\cite[Section~3.3]{Keiner2007}. Conversely, uniform MZ
constants over increasing bandwidth $L$ require
$h_X=\sO(L^{-1})$~\cite{Filbir2011}. The geometry of $X$ and the condition of 
$\mt F^{H} \mt W \mt F$ are therefore closely related, and even one large gap
can cause instability. While regularization allows us to stabilize such
an ill-conditioned or underdetermined problem, it introduces bias. Moreover, the
single parameter $\alpha$ must balance stability against oversmoothing over the
whole domain $\sX$, even when the node density varies strongly.        

This motivates \emph{harmonic approximation via moving least squares} (HAMLS), a
global harmonic approximation scheme that circumvents those problems. It first
computes approximate values on a quadrature grid via moving least squares
(MLS)~\cite{Wendland2001b,Wendland2001a,Hielscher2024} 
and then obtains the harmonic expansion from the resulting grid values by an
adjoint fast Fourier transform~\cite{Keiner2009,Mildenberger2023,Potts2009}.
Thus the large global least-squares problem is replaced by many small local
problems and a stable, non-iterative quadrature. Poor local stencils may still
cause ill-conditioned local systems, but their small size allows us to solve
them directly and regularize them individually. To this end, we use a spatially
adapted support radius $\delta(x)$ and a local instability measure tailored to
MLS evaluation, from which we derive a degree-adapted regularization, see
\Cref{sec:mls transfer} and \Cref{sec:mls regularization}. All ingredients are 
chosen to guarantee a certain order of smoothness of the global MLS
approximation. These adaptations of MLS are crucial to the success of HAMLS on
ill-distributed data.     

Our contribution is twofold. First, we formulate HAMLS uniformly on $\T^d$,
$\S{d}$, and $\SO(3)$ and show that, for good node sets and sufficiently smooth
functions, its error with unregularized MLS is bounded by the ideal
quadrature-based approximation error, plus a term of order $h_X^{K+1}$, where
$K$ is the local polynomial degree, see \Cref{thm:hamls_error_bound_good_data}.
This term becomes negligible as the fill distance $h_{X}$ decreases, recovering
the accuracy of the harmonic approximation obtained from exact values on a
quadrature grid. Second, numerical experiments on $\S{2}$ implemented in
MTEX~\cite{Hielscher2007} demonstrate our new algorithm. For ill-distributed
nodes on $\S{2}$, HAMLS attains errors comparable to or smaller than those of
LSQR at a fraction of the runtime, with the advantage increasing with bandwidth. 

In \Cref{sec:hamls}, we present the HAMLS algorithm and explain its steps in
detail, with an emphasis on adapting the MLS step to ill-distributed data. We
close the section with the mentioned error analysis, see
\Cref{thm:hamls_error_bound_good_data}. \Cref{sec:numerics} shows our experiments
on $\S{2}$, with both measured and synthetic data.

\section{The HAMLS Algorithm}  
\label{sec:hamls}

The HAMLS algorithm takes scattered data and a bandwidth $L$ as input. It uses
moving least squares (MLS) to obtain approximate values on a quadrature grid and
then computes the harmonic coefficients from these grid values via a Fourier
transform.
Throughout, $\mt F$ denotes the sampling matrix of the orthonormal
harmonic basis $\{\phi_k\}_{k\in\mathcal I_L}$ introduced above, and $\mt W$
denotes the corresponding diagonal weight matrix. Depending on the setting,
these matrices refer to scattered data $X$ with chosen weights for the
least-squares problem or to quadrature nodes $Q$ with quadrature weights in
the quadrature step of HAMLS, see below.

\begin{algorithm}[htbp]
  \caption{Harmonic Approximation via Moving Least Squares (HAMLS)}
  \label{alg:HAMLS}
  \SetAlgoLined
  \KwInput{\begin{tabular}[t]{p{4cm}l}
    $(x_{i},y_{i})_{i=1}^{N} \in (\sX \times \IC)^{N}$
    & Scattered samples $y_{i}\approx f(x_{i})$ of $f\colon\sX\to\IC$ \\ $L\in\IN$
    & Harmonic bandwidth
  \end{tabular}}
  \begin{itemize}[rightmargin=\algomargin]
    \item[1.] Choose a quadrature grid $Q=\{q_{j}\}_{j=1}^{M_{Q}}\subset\sX$ with
          weights $\omega_{j}>0$ such that the corresponding quadrature rule is
          exact on $\bandlimFunc{2L}{\sX}$. 
    \item[2.] Construct the MLS approximation
          $\sM_{\vc y} \colon \sX\to\IC$ of the data
          $\vc y = (y_{i})_{i=1}^{N}$ and evaluate it at the quadrature
          nodes,
          \begin{equation*}
            f_{j}=\sM_{\vc y}(q_{j}), \qquad j=1,\dots,M_{Q} .
          \end{equation*}
    \item[3.] Compute the harmonic coefficients
          $\hat{\vc f} = \mt F^{H} \mt W \vc{f}$ by the adjoint Fourier transform 
          associated with the quadrature rule.
  \end{itemize}
  \KwOutput{Harmonic approximation $f_{\mathrm{HAMLS}}\in\bandlimFunc{L}{\sX}$
    with $f_{\mathrm{HAMLS}}(x_{i})\approx y_{i}$} 
\end{algorithm}   

\noindent Note that the quadrature rule has to be exact
on $\bandlimFunc{2L}{\sX}$ and not only on $\Pi_{L}(\sX)$, because the
integrands $g\overline{\phi_k}$ of $g\in\bandlimFunc{L}{\sX}$ have bandwidth up
to $2L$. 

In general, the MLS reconstruction $\sM_{\vc y}$ is not band-limited. However,
it inherits the minimal order of smoothness from its weight function, its ansatz
space, and any additional parameters, see \Cref{sec:mls transfer}.
With our construction, $\sM_{\vc{y}}$ has a prescribed smoothness order provided
it exists, thus, in particular, $\sM_{\vc{y}} \in \L2(\sX)$ on the compact domain
$\sX$. Note that step~3 returns the coefficients of $\sM_{\vc y}$ only up to the
aliasing contributions of the higher degrees. It is therefore desirable for 
$\sM_{\vc y} \colon \sX \to \IC$ to possess a certain smoothness order, which
then causes faster decay of the coefficients belonging to those higher-degree
components.  

Altogether, HAMLS replaces one global least-squares problem by many independent
small local problems, which can be solved directly and in parallel, and a
subsequent fast, stable adjoint Fourier transform. Numerical instability is confined
to the small local problems, where regularization can be applied individually
and no iterative solver is required.

\subsection{Efficient Quadrature}\label{sec:quadrature grid}

The general idea relies on computing the global Fourier expansion of the MLS approximation $\sM_{\vc y}$.
Therefore, the harmonic coefficients are computed by quadrature.
The choice of the quadrature grid is of significant interest, since it obviously influences the error of the HAMLS algorithm, but it also has a high impact on the computation time, as evaluating $\sM_{\vc y}$ constitutes most of the computational cost of HAMLS, see~\Cref{tab:TimeHAMLSsteps}.
Hence, the quadrature grid should be kept deliberately small.

On $\T^d$, the equidistant trapezoidal rule with $M_Q=(2L+1)^d$ is exact on $\Pi_{2L}(\T^d)$ and minimal~\cite{Plonka2018}.
In higher dimensions, rank-$1$ lattices~\cite{Kaemmerer2015} or multiple rank-$1$ lattices~\cite{Kaemmerer2018} may be more efficient for index sets other than the full cube.
Minimal rules on $\S{d}$ and $\SO(3)$ are not known in general, although nearly optimal grids~\cite{Graef2013,Graef2011a,Graef2012,Marki2024} exist.

However, we are more interested in controlling the error and therefore
investigate quadrature rules that are exact on $\bandlimFunc{2L}{\sX}$, while
keeping the number of nodes small. Hence, we use product rules obtained from
coordinate representations by applying one-dimensional trapezoidal or Gaussian
quadrature in each coordinate:
\begin{itemize}
  \item On $\S{2}$, trapezoidal quadrature in azimuth and Gauss-Legendre quadrature in the cosine of the polar angle yield $M_Q=(2L+2)(L+1)$ nodes and exactness on $\bandlimFunc{2L}{\S{2}}$~\cite{Graef2009,Graef2009a}.
  \item On $\S{d}$, the same principle applies: the trapezoidal rule treats the single periodic angle, and Gauss-Jacobi rules adapted to the weights $\sin^{k}\vartheta$ treat the $d-1$ polar angles~\cite[Theorem~6.2.3]{DaiXu2013}.
  \item On $\SO(3)$, trapezoidal rules in the Euler angles $\alpha,\gamma$ and
        Gauss-Legendre quadrature in $\cos\beta$ yield $M_Q=(L+1)(2L+1)^2$ nodes and exactness on $\bandlimFunc{2L}{\SO(3)}$~\cite{Khalid2015}.
\end{itemize}
Note that using a Clenshaw-Curtis quadrature rule instead of Gauss-Legendre on $\S{2}$ and $\SO(3)$ would also lead to an exact quadrature rule, but with roughly twice as many nodes, see~\cite{Driscoll1994,Potts1998} and~\cite{Potts2009,Hielscher2026}, respectively.
In return, the grid would be equispaced in all coordinates, so that the Fourier
stage of step~3 can use a standard equispaced FFT on the respective domain.
However, as we will see later in~\Cref{tab:TimeHAMLSsteps}, it is not the Fourier transform but the evaluation of the MLS reconstruction $\sM_{\vc y}$ that dominates the computation time.
Hence, we try to minimize the number of quadrature nodes instead.

We conclude this section by collecting the fast algorithms that realize $\mt F$ and $\mt F^{H}$ on these grids.

On the torus, the computation of the adjoint Fourier transform $\mt F^{H}$ reduces to the standard FFT, since the quadrature grid is equispaced, see~\cite{Plonka2018}.
For nonequispaced data, the nonequispaced fast Fourier transform (NFFT)~\cite{Potts2001} can be used instead.

On the $d$-sphere and on $\SO(3)$, adapted fast spherical and rotational Fourier
transforms are available, see~\cite{Potts1998,Kunis2003,Potts2003},
\cite[Section~5]{Maslen1997} and \cite{Potts2009,Kostelec2008,Risbo1996},
respectively. These approaches essentially map spherical and rotational
functions to ordinary Fourier series on a torus via double coverings,
see~\cite{Mildenberger2022,Hielscher2026,Mildenberger2023}, and subsequently
make use of NFFT~\cite{Potts2009}.

\subsection{Moving Least Squares Approximation} 
\label{sec:mls transfer}

Moving least squares (MLS) approximation was first introduced for scattered data
approximation on $\R^{d}$ in~\cite{Lancaster1981} and further analyzed in
\cite{Levin2000,Wendland2001b,Mirzaei2015}. Later, it was extended to
spheres \cite{Wendland2001a,Hielscher2024} and to general manifolds
\cite{Sober2021}. Besides the data $\vc{y} = (y_{i})_{i=1}^{N}$ at the
nodes $X = \set{x_{i}}{}_{i=1}^{N}$, MLS needs a weight function
$w \colon \sX \times \sX \to \lbrack 0,\infty \rparen$ and a finite-dimensional
ansatz space $\sG$. For a given center $x \in \sX$, assume that
\begin{equation}
  \label{eq:mls_local_problem}
  g_x^* \coloneqq \argmin_{g\in\sG} \sum_{x_i\in X}
  w(x,x_i)\abs{y_i-g(x_i)}^2
\end{equation}
is uniquely defined. The MLS approximation at $x$ is then defined as 
$\sM_{\vc y}(x) \coloneqq g_{x}^{\ast}(x)$.
The weight function $w$ selects the local nodes
\begin{equation}
  \label{eq:mls_local_node_set}
  X(x) \coloneqq \set{x_{i} \in X}{w(x,x_{i}) > 0}
\end{equation}
and the ansatz space $\sG$ determines the local fit. The global MLS
approximation $\sM_{\vc y} \colon \sX \to \IC$ inherits the minimal order of
smoothness of those two ingredients \cite{Levin2000,Wendland2001b}. 

Both $w$ and $\sG$ must be carefully adapted to the domain and the data layout.
Here we consider a radial weight function. For a continuous, nonincreasing
function $\phi \colon \lbrack 0,\infty \rparen \to \lbrack 0,\infty \rparen$
with $\supp(\phi) = [0,1]$ and support radius $\delta > 0$, we set 
\begin{equation}
  \label{eq:weight_function_radial}
  w(x,y) \coloneqq \phi\braces*{\frac{\mathrm{d}_{\sX}(x,y)}{\delta}} ,
\end{equation}
so that $X(x) = B_{\delta}(x) \cap X$, where $B_{\delta}(x)$ is the open
geodesic ball with center $x$ and radius $\delta$. Common choices for $\phi$
are the Wendland functions~\cite{Wendland1995}, which allow $w$ to satisfy a
prescribed smoothness order. For quasi-uniform nodes, a suitable support radius
$\delta > 0$ allows for the error estimates in
\Cref{thm:mls_error_bound_good_data}. For ill-distributed nodes, we let $\delta$
depend on $x$, as described later on.  

Next, we choose the ansatz space $\sG$. Let $K \in \N$ denote its maximal 
polynomial degree. The easiest case is $\sX = \T^{d}$. Since the torus is
locally isometric to $\R^{d}$, we choose $\sG = \Pi_{K}(\R^{d})$ in local
coordinates centered at the evaluation point. Next, for MLS on spheres, it was
shown in~\cite{Hielscher2024} that 
\begin{equation}
  \label{eq:reduced_ansatz_space}
  \Pi_{K,2}(\S{d})
  \coloneqq
  \bigoplus_{\substack{0 \leq k \leq K\\ k \equiv K \pmod*{2}}}\Harm_{k}(\S{d})
\end{equation}
is the natural ansatz space. Surprisingly, only every second 
harmonic degree is needed to attain local approximation order $K+1$. The reduced
size also favors computational efficiency and stability. Finally, the case
$\sX = \SO(3)$ can be reduced to the case of
$\S{3}$ via the unit quaternion representation
$\SO(3) \cong \S{3} \slash\set{\pm 1}{}$, see~\cite{Graef2011}.
Here we also assume that the support radius of MLS is sufficiently small
  that all neighbors have a quaternion representative on the hemisphere
  centered at the representative of the evaluation point. The space
  $\Pi_{K,2}(\S{3})$ is then understood locally using these representatives.
Overall, we have
\begin{equation}
  \label{eq:ansatz_space}
  \sG_K \coloneqq
  \begin{cases}
    \Pi_K(\R^d), & \text{if } \sX=\T^d \\
    \Pi_{K,2}(\S{d}), & \text{if } \sX=\S{d} \\
    \Pi_{K,2}(\S{3}), & \text{if } \sX=\SO(3)  
  \end{cases} . 
\end{equation}

For all three choices of $\sX$, with $\sG$ and $w$ as above, there exists a
classical MLS error estimate. Before introducing it, we recall the
  separation distance $q_{X}$ and the fill distance $h_{X}$ from the
  introduction:
\begin{equation}
  \label{eq:fill_and_separation_distance}
  q_X = \frac{1}{2}\min_{x_i\neq x_j}\mathrm{d}_{\sX}(x_i,x_j) , \qquad
  h_X = \sup_{x\in\sX}\min_{x_i\in X}\mathrm{d}_{\sX}(x,x_i) .
\end{equation}
Proofs of the following theorem on $\T^{d}$ and $\S{d}$ can be
found in \cite{Wendland2001b,Hielscher2024}. The transfer to $\SO(3)$ follows  
from \cite{Graef2011}.

\begin{theorem}[MLS with good data]
  \label{thm:mls_error_bound_good_data}
  Let $\sX \in \set{\T^{d},\S{d},\SO(3)}{d \in \IN}$ and $K \in \IN$ be fixed,
  and choose the ansatz space $\sG=\sG_K$ as in \eqref{eq:ansatz_space}. There
  exists a constant $\rho=\rho(\sX,K)>0$ with the following
  property. 
  For every fixed $R \geq \rho$ and quasi-uniformity constant $c \geq 1$, there
  exist constants $h_0=h_0(\sX,K,R)>0$ and
  $C=C(\sX,K,\phi,c,R)>0$ such that, for every
  $f \in \sC(\sX)$ and every node set $X \subset \sX$ with 
  \begin{equation*}
    h_{X} \leq h_{0}
    \qquad\text{and}\qquad
    \frac{h_{X}}{q_{X}} \leq c ,
  \end{equation*}
  the MLS approximation with weight function \eqref{eq:weight_function_radial}
  and support radius $\delta \equiv R\,h_X$ satisfies 
  \begin{equation}
    \label{eq:mls_error_bound_local}
    \abs{\sM_{f\vert_{X}}(x) - f(x)}
    \leq C \cdot \inf\limits_{g \in \sG} \norm{f - g}_{B_{\delta}(x),\infty} ,
    \qquad x \in \sX .
  \end{equation}
  If additionally $f \in \sC^{K+1}(\sX)$, then there also exists a constant
  $C(f)$ such that
  \begin{equation}
    \label{eq:mls_error_bound_global}
    \norm{\sM_{f\vert_{X}} - f}_{\sX,\infty}
    \leq C(f) \cdot C \cdot h_{X}^{K+1} . 
  \end{equation}
\end{theorem}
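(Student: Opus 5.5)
The argument follows the classical route of Wendland for moving least squares on $\R^{d}$: prove a local \emph{norming set} property of the ansatz space $\sG_K$, deduce a uniform bound on the Lebesgue constant of the MLS operator, and then combine reproduction of $\sG_K$ with a Taylor-type local approximation estimate.

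\textbf{Step 1: norming sets.} First I show that there are $\rho=\rho(\sX,K)$ and $h_0$ such that, for every $x\in\sX$ and every $\delta=Rh_X$ with $R\ge\rho$ and $h_X\le h_0$, the local nodes $X(x)$, restricted to the smaller ball $B_{\delta/2}(x)$, are a norming set for $\sG_K$ on $B_{\delta}(x)$. That is,
\begin{equation*}
\norm{g}_{B_\delta(x),\infty}\le 2\max_{x_i\in X\cap B_{\delta/2}(x)}\abs{g(x_i)}
\qquad\text{for all } g\in\sG_K .
\end{equation*}
The tool is a Markov-type inequality $\norm{\nabla g}_{B_\delta(x),\infty}\le C_K\delta^{-1}\norm{g}_{B_\delta(x),\infty}$ on geodesic balls. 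The cone-condition argument of Wendland then gives the bound once $h_X/\delta$ is small, which fixes $\rho$.
\begin{itemize}
\item On $\T^d$ this is the Euclidean statement in local coordinates.
\item On $\S{d}$ the inequality for $\Pi_{K,2}(\S{d})$ on small caps is the main technical input. I would cite it from \cite{Hielscher2024}, where it is proved by mapping the cap to a ball in $\R^d$ via the projection under which $\Pi_{K,2}$ corresponds to polynomials of degree $K$ (homogeneous extensions of degree $K$, up to the factor $\abs{z}^{K}$).
\item On $\SO(3)$ I would transfer the result through the double cover $\S{3}\to\SO(3)$, using the hemisphere representatives. The distances agree up to a constant factor \cite{Graef2011}, so $h_X$ and $q_X$ change only by constants.
\end{itemize}

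\textbf{Step 2: Lebesgue constant bound.} Write $\sM_{\vc y}(x)=\sum_i a_i(x)y_i$. Because $g^*_x$ is unique (guaranteed by the norming property), the MLS operator reproduces $\sG_K$. The standard duality argument, which writes $a_i(x)$ as the solution of the minimization of $\sum_i a_i^2/w(x,x_i)$ subject to reproduction, gives the bound
\begin{equation*}
\sum_i\abs{a_i(x)}\le C_1\Big(\sum_{x_i\in B_{\delta/2}(x)}\phi(1/2)^{-1}\Big)^{1/2}\big(\#X(x)\big)^{1/2}.
\end{equation*}
Quasi-uniformity $h_X/q_X\le c$ together with a volume comparison of geodesic balls gives $\#X(x)\le C_2(cR)^{\dim\sX}$. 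So the Lebesgue constant satisfies $\sum_i\abs{a_i(x)}\le \Lambda$ with $\Lambda=\Lambda(\sX,K,\phi,c,R)$.

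\textbf{Step 3: local estimate \eqref{eq:mls_error_bound_local}.} For any $g\in\sG$, reproduction and the fact that $a_i(x)=0$ off $B_\delta(x)$ give
\begin{equation*}
\abs{\sM_{f|_X}(x)-f(x)}\le\abs{f(x)-g(x)}+\sum_i\abs{a_i(x)}\abs{g(x_i)-f(x_i)}\le(1+\Lambda)\norm{f-g}_{B_\delta(x),\infty}.
\end{equation*}
Taking the infimum over $g$ yields the claim with $C=1+\Lambda$.

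\textbf{Step 4: global estimate \eqref{eq:mls_error_bound_global}.} For $f\in\sC^{K+1}(\sX)$ it remains to show
\begin{equation*}
\inf_{g\in\sG_K}\norm{f-g}_{B_\delta(x),\infty}\le C(f)\,\delta^{K+1}.
\end{equation*}
\begin{itemize}
\item On $\T^d$ this is Taylor's theorem in local coordinates.
\item On $\S{d}$ I would again use the result of \cite{Hielscher2024}: the Taylor polynomial of degree $K$ of $f$ composed with the local chart is matched by an element of $\Pi_{K,2}(\S{d})$ up to $\sO(\delta^{K+1})$. This uses that on the sphere lower-degree terms can be lifted to degree $K$ by multiplying by powers of $\abs{z}^2=1$.
\item On $\SO(3)$, lift $f$ to an even function on $\S{3}$ and apply the sphere result.
\end{itemize}
Since $\delta=Rh_X$, this gives the bound $C(f)\,C\,h_X^{K+1}$ after absorbing $R^{K+1}$ into $C$.

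The main obstacle is Step 1 for the reduced space $\Pi_{K,2}$ and its uniformity in $x$; on the homogeneous spaces considered here, uniformity follows from rotation invariance.
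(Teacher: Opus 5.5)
Your proposal is correct and follows the same route as the paper: the paper gives no self-contained proof but defers to Wendland's norming-set and Lebesgue-constant argument \cite{Wendland2001b}, its spherical counterpart for $\Pi_{K,2}(\S{d})$ \cite{Hielscher2024}, and the quaternion transfer to $\SO(3)$ \cite{Graef2011}, which is exactly the chain you outline. Two small slips need fixing: first, samples in $B_{\delta/2}(x)$ cannot norm $\sG_K$ on the larger ball $B_{\delta}(x)$ with constant $2$ (a $K$-th power of a local coordinate loses a factor $2^{K}$), but Step 2 only needs $\abs{g(x)}\le C_K\max_{x_i\in X\cap B_{\delta/2}(x)}\abs{g(x_i)}$; second, in Step 4 multiplying by $\abs{z}^{2}$ only lifts terms whose degree has the same parity as $K$, and the opposite-parity terms are handled by the identity $g\bigl((x+t)/\abs{x+t}\bigr)=(1+\abs{t}^{2})^{-K/2}\,p(x+t)$ for $g=p\vert_{\S{d}}$ with $p$ homogeneous of degree $K$ and $t\perp x$, which you already invoke in Step 1.
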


For ill-distributed nodes, the conditions on $h_{X}$ and $q_{X}$ need not hold.
Rescaling $w(x,x_{i})$ by the volume of the Voronoi cell belonging to $x_{i}$
allows us to get rid of the quasi-uniformity condition $\nicefrac{h_{X}}{q_{X}} \leq c$
under similar assumptions, but the condition $h_{X} \leq h_{0}$
remains~\cite{Lipman2009}.

\begin{remark}
  The proofs in~\cite{Lipman2009} are stated for MLS on $\R^{d}$. In the
  spherical setting, the final step in the proof of Theorem~3.2 in
  \cite{Lipman2009} needs a small adjustment. There, the ratio of ball volumes
  equals the ratio of their radii raised to the $d$-th power. For spherical
  caps, this equality is no longer valid, but a fixed ratio of radii still gives
  a uniformly bounded volume ratio.
  The other arguments need no adjustment or
  follow directly from known results such as \cite{Hielscher2024}, thus the
  result carries over to spheres and, via $\S{3}$, to $\SO(3)$.
\end{remark}

Another difficulty that arises for ill-distributed nodes is the following. A
constant radius large enough to include at least $\dim(\sG)$ neighbors in 
sparse regions may include far too many in dense regions. We therefore let
$\delta$ depend on $x$. For a chosen
neighbor count $\dim(\sG) \leq n \leq N$ and small $\varepsilon > 0$, we set 
\begin{equation}
  \label{eq:adaptive_support_radius}
  \delta(x) \coloneqq (1+\varepsilon)\cdot\mathrm{d}_n(x),
\end{equation}
where $\mathrm{d}_n(x)$ is the distance from $x$ to its $n$-th nearest
neighbor. The factor $1+\varepsilon$ places at least $n$ nodes strictly inside
the support of $w(x,\cdot)$, ensuring the necessary condition
$\#X(x) \geq \dim(\sG)$ for uniqueness of $g_{x}^{\ast}$ in
\eqref{eq:mls_local_problem}. If $X$ is drawn \iid with respect to some density
on $\sX$, this condition is even sufficient almost surely at each fixed
evaluation point, see \cite{Accio2023}.

\begin{remark}   
  For a family of node sets, the local neighbor count   
  \begin{equation}
    \label{eq:number_of_local_nodes}
    n(x) \coloneqq \#X(x)
  \end{equation}
  can theoretically grow arbitrarily large for any $\varepsilon > 0$, as the 
  separation distance approaches zero. Quasi-uniformity is one possible remedy. 
\end{remark}

Since $\mathrm{d}_n$ is generally only continuous, the resulting MLS
approximation is currently also only continuous. We thus evaluate
$\mathrm{d}_{n}$ on an approximately uniform auxiliary grid, such as a Fibonacci
grid on $\S{2}$, and replace it by its degree-zero MLS approximation
$\widetilde{\mathrm d}_n$ using a suitable constant support radius
proportional to its fill distance. Choosing the auxiliary grid fine
enough ensures $(1+\varepsilon)\widetilde{\mathrm d}_n(x)>\mathrm d_n(x)$ for
all $x\in\sX$ and thus also $\#X(x)\geq n$ even after smoothing.

\subsection{Regularization of the Local MLS Problems}
\label{sec:mls regularization}

The adaptive radius \eqref{eq:adaptive_support_radius} controls the size of the
local problems, but they may still be ill-conditioned, for example if the
local nodes are clustered far away from the center. The local problems therefore  
need regularization. Since MLS does not need the entire local fit 
$g_{x}^{\ast}$, but only its center value $\sM_{\vc y}(x) = g_{x}^{\ast}(x)$,
we regularize with respect to the instability of precisely this value. In the
following, we derive a scalar measure of that instability, relate it to the
Lebesgue constant and to the condition number of the local system, and then
construct a degree-adapted penalty from it. 

\paragraph{Locally adapted ansatz space.}

In order to do that, we first have to revise our choice of the ansatz space. We
want regularized MLS to reproduce constants. This requires $1 \in \sG$, which is
currently violated by the choice \eqref{eq:ansatz_space} for
$\sX \in \set{\S{d},\SO(3)}{d \in \N}$ whenever the maximal degree $K$ is odd.
For regularized MLS on those domains, we therefore use the ansatz space of
polynomials up to degree $K$ in local tangent coordinates. On $\S{d}$, we set 
\begin{equation*}
  \Pi_K^{\mathrm{tan}}(\S{d};x) = \set{p\circ P_x}{p\in\Pi_K(T_x(\S{d}))} ,
\end{equation*}
where $P_x$ is the orthogonal projection onto the tangent space $T_x(\S{d})$. We
therefore assume that the local neighborhoods are contained in the open
hemisphere centered at $x$, \ie $\delta(x) < \frac {\pi}2$ for all
$x \in \S{d}$. On $\SO(3)$, we apply the same construction via the unit
quaternion representation $\SO(3) \cong \S{3} \slash \set{\pm 1}{}$. We employ
this space for all $K$, whether even or odd.
On $\T^d$, the notation $\Pi_K^{\mathrm{tan}}(\T^d;x)$ refers to
the polynomial space $\Pi_K(\R^d)$ in local coordinates centered at $x$.

This new ansatz space still possesses the same dimension and local approximation
order as the old one. For unregularized MLS, the estimates of
\Cref{thm:mls_error_bound_good_data} also remain true with $\Pi_K^{\mathrm{tan}}(\sX;x)$ in place of
$\sG_K$. For more details and proofs, we refer to
\cite[Section~5]{Hielscher2024}.

\paragraph{Center amplification and the Lebesgue constant.}

All quantities below refer to the local problem at one fixed center $x \in \sX$.
For simplicity, we omit the dependence on $x$ in the notation for
the remaining quantities. We further
assume that \eqref{eq:mls_local_problem} has a unique minimizer, so
$\sM_{\vc{y}}(x)$ is well-defined. Let $I \coloneqq \set{i}{w(x,x_{i}) > 0}$ be
the set of local indices, so that $\abs{I} = n(x)$ in the notation of
\eqref{eq:number_of_local_nodes}. We normalize the local weights to
$\sum_{i \in I} w_{i} = 1$, which leaves the local minimizer in
\eqref{eq:mls_local_problem} unchanged. Let $g_{1},\dots,g_{m}$ be a monomial
basis of $\Pi_K^{\mathrm{tan}}(\sX;x)$,
$m = \dim(\Pi_K^{\mathrm{tan}}(\sX;x))$, that is sorted in ascending order by degree, centered at
$x$, and scaled to unit discrete norm
\begin{equation*}
  \sum_{i \in I} w_{i} \abs{g_{j}(x_{i})}^{2} = 1 ,
  \qquad j = 1,\dots,m .
\end{equation*}
Note that we have $g_1 \equiv 1$ and $g_{j}(x) = 0$ for $j \geq 2$. With all
those conventions, we set  
\begin{equation}
  \label{eq:mls local vectors}
  \mt{B} \coloneqq \bigl(\sqrt{w_{i}}\,g_{j}(x_{i})\bigr)_{i \in I,\, j=1,\dots,m} ,
  \qquad
  \mt{H} \coloneqq \mt{B}^{H} \mt{B} ,
  \qquad
  \vc{y}_{w} \coloneqq \bigl(\sqrt{w_{i}}\,y_{i}\bigr)_{i \in I} ,
\end{equation}
where $\mt{H} \in \IC^{m \times m}$ is the corresponding Gram matrix with
respect to the local discrete inner product. Due to normalization, its diagonal 
entries are one. The local problem \eqref{eq:mls_local_problem} now reads
$\min_{\vc c} \norm{\mt{B} \vc c - \vc y_{w}}_{2}^{2}$ with the normal equations 
\begin{equation}
  \label{eq:local_normal_equations}
  \mt{H} \vc c = \mt{B}^{H} \vc y_{w} . 
\end{equation}
Since the basis is centered around $x$, the MLS approximation at $x$ equals the
first coefficient, $\sM_{\vc y}(x) = \vc e_{1}^{\top} \vc c = c_{1}$ with  
$\vc e_{1} = (1,0,\dots,0)^{\top} \in \R^{m}$. We measure the instability of
$\sM_{\vc{y}}(x)$ by the largest change of this value under a unit perturbation
$\vc{\Delta y}$ of the local data. By linearity, we have  
\begin{equation}
  \label{eq:instability_of_center_evaluation}
  \sup_{\norm{\vc{\Delta y}}_{2} \leq 1}
  \abs{\sM_{\vc y + \vc{\Delta y}}(x) - \sM_{\vc y}(x)}
  = \sup_{\norm{\vc{\Delta y}}_{2} \leq 1} \abs{\sM_{\vc{\Delta y}}(x)} .
\end{equation}
Solving \eqref{eq:local_normal_equations} gives
$\sM_{\vc{\Delta y}}(x)
= \vc e_{1}^{\top} \mt{H}^{-1} \mt{B}^{H} \vc{\Delta y}_{w}$, and since
the normalization of the local weights implies 
$\norm{\vc{\Delta y}_{w}}_{2} \leq \norm{\vc{\Delta y}}_{2}$, the
Cauchy-Schwarz inequality yields 
\begin{equation}
  \label{eq:instability_of_center_evaluation_estimate}
  \sup_{\norm{\vc{\Delta y}}_{2} \leq 1} \abs{\sM_{\vc{\Delta y}}(x)}
  \leq \norm{\mt{B} \mt{H}^{-1} \vc e_{1}}_{2}
  = \braces[\big]{\vc e_{1}^{\top} \mt{H}^{-1} \vc e_{1}}^{\nicefrac 12}
  = \sqrt{\bigl[\mt{H}^{-1}\bigr]_{1,1}} .
\end{equation}
We call $\chi = \chi(x) \coloneqq \bigl[\mt{H}^{-1}\bigr]_{1,1}$ the
\emph{center amplification} of the local problem. This is our measure of the
local instability. The bound
\eqref{eq:instability_of_center_evaluation_estimate} is sharp if the
perturbations are instead measured in the weighted norm
$\norm{\vc{\Delta y}}_{w} \coloneqq \norm{\vc{\Delta y}_{w}}_{2}$. In
particular, one can show that $\sqrt{\chi}$ is exactly the norm of the point
evaluation functional $\delta_x\colon\Pi_K^{\mathrm{tan}}(\sX;x)\to\IC$,
$g\mapsto g(x)$ with respect to the local discrete norm on
$\Pi_K^{\mathrm{tan}}(\sX;x)$.

\begin{remark}
  \label{rem:chi vs cond}
  The diagonal entries of $\mt{H}$ equal one,
  thus $\lambda_{\max}(\mt{H}) \geq 1$, and therefore  
  \begin{equation*}
    \chi
    \leq \frac{1}{\lambda_{\min}(\mt{H})}
    \leq \frac{\lambda_{\max}(\mt{H})}{\lambda_{\min}(\mt{H})}
    = \cond(\mt{H}) .
  \end{equation*}
  The converse fails: $\cond(\mt{H})$ can be arbitrarily large while $\chi$
  stays bounded. A condition-based instability measure can therefore introduce
  unnecessary regularization even when $\sM_{\vc{y}}(x)$ is stable.   
\end{remark}

\begin{remark}
  \label{rem:chi vs lebesgue}
  The MLS approximation at $x$ can be written as
  \begin{equation*}
    \sM_{\vc y}(x) = \sum_{i \in I} a_{i}^{\ast}\, y_{i} ,
    \qquad
    a_{i}^{\ast} = \sqrt{w_{i}}\, \overline{\psi_{i}} ,
    \qquad
    \vc\psi \coloneqq \mt{B} \mt{H}^{-1} \vc e_{1} ,
  \end{equation*}
  where $a_{i}^{\ast}$ are the values of the \emph{generating functions} of MLS
  at $x$~\cite{Wendland2001b}. The sum of their absolute values 
  $\Lambda(x) \coloneqq \sum_{i \in I} \abs{a_{i}^{\ast}}$ is the
  \emph{Lebesgue constant} of MLS at $x$. Since $\sum_{i \in I} w_i = 1$, the
  Cauchy-Schwarz inequality yields  
  \begin{equation*}
    \Lambda(x)
    = \sum_{i \in I} \sqrt{w_{i}}\, \abs{\psi_{i}}
    \leq \braces[\bigg]{\sum_{i \in I} w_{i}}^{\nicefrac 12}
    \norm{\vc\psi}_{2}
    = \sqrt{\chi(x)} .
  \end{equation*}
  Therefore, controlling $\chi$ implicitly also controls the Lebesgue constant.
  Due to the standard error bound for unregularized MLS \cite[Theorem~2.5]{Hielscher2024} 
  \begin{equation}
    \label{eq:lebesgue error bound}
    \abs{f(x) - \sM_{f\vert_{X}}(x)}
    \leq \bigl(1 + \Lambda(x)\bigr)
    \inf_{g \in \Pi_K^{\mathrm{tan}}(\sX;x)} \norm{f - g}_{B_{\delta}(x),\infty} ,
  \end{equation}
  this even implies error bounds for MLS.   
\end{remark}

\paragraph{A geometric interpretation.}

We now derive a new expression for $\chi$ that provides more insight and will
later be used to compute the local regularization strength. To this end, we
separate $g_1$ from the other basis functions and write  
\begin{equation*}
  \mt{B} = \begin{pmatrix} \vc v_{1} & \mt{V} \end{pmatrix} ,
  \qquad
  \mt{H} = \begin{pmatrix} 1 & \vc b^{H} \\ \vc b & \mt{C} \end{pmatrix} ,
  \qquad
  \vc b \coloneqq \mt{V}^{H} \vc v_{1} , \quad
  \mt{C} \coloneqq \mt{V}^{H} \mt{V} .
\end{equation*}
The Schur complement formula for the $(1,1)$ entry of the inverse yields
\begin{equation}
  \label{eq:chi via coupling}
  \chi = \frac{1}{1 - r} ,
  \qquad
  r \coloneqq \vc b^{H} \mt{C}^{-1} \vc b
  = \norm{\mt{P}_{\mt{V}}\, \vc v_{1}}_{2}^{2} \in [0,1) ,
\end{equation}
where $\mt{P}_{\mt{V}} \coloneqq \mt{V} \mt{C}^{-1} \mt{V}^{H}$ is the
orthogonal projection onto the image of $\mt{V}$. In particular,
$r = \norm{\mt{P}_{\mt{V}}\vc{v}_{1}}_{2}^{2} \leq \norm{\vc{v}_{1}}_{2}^{2} = 1$ 
and thus $\chi \geq 1$. The \emph{coupling} $r$ measures how well the weighted 
samples of the constant $g_1$ can be approximated by those of the remaining
basis functions. Since $\norm{\vc v_{1}}_{2} = 1$, we can also write  
\begin{equation*}
  \sqrt{\chi} = \frac{1}{\sin\theta} ,
\end{equation*}
where $\theta$ is the angle between $g_1$ and the span of the remaining basis
functions with respect to the local discrete inner product.
As $r$ approaches $1$, the local samples of $g_1$ become nearly
indistinguishable from the span of the remaining basis functions, and 
the center amplification $\chi$ becomes arbitrarily large. Conversely, if  
$r$ stays bounded away from $1$, then $\chi$ stays bounded even if $\mt{C}$ and
hence $\mt{H}$ are arbitrarily ill-conditioned, as noted in \Cref{rem:chi vs
  cond}.   

\paragraph{Degree-adapted regularization.}

We want regularized MLS to reproduce constants. Therefore, regularization will
only act on the lower-right block $\mt C$ of $\mt H$. In order to adapt the
regularization strength to the degree, we compute the Cholesky decomposition      
\begin{equation}
  \label{eq:mls local Cholesky factor}
  \mt{C} = \mt{L} \mt{L}^{H} ,
\end{equation} 
where the factor $\mt{L} \in \IC^{(m-1) \times (m-1)}$ is a lower
triangular matrix with positive diagonal entries. The columns of
$\mt{V}\mt{L}^{-H}$ are orthonormal. Since $\mt{L}^{-H}$ is upper triangular,
the first $j$ columns of $\mt{V}\mt{L}^{-H}$ span the same space as those of
$\mt{V}$, so the hierarchy of the degrees is preserved.
Let $\ell(j) \coloneqq \deg (g_{j+1})$, $j=1,\dots,m-1$, denote the degree of the
$j$-th orthonormalized nonconstant basis function and set    
\begin{equation}
  \label{eq:mls degree multipliers}
  \mt{D} \coloneqq \operatorname{diag}\bigl(\mu_{1},\dots,\mu_{m-1}\bigr) ,
  \qquad
  \mu_{j} \coloneqq \bigl(\ell(j)\,(\ell(j)+d-1)\bigr)^{s} , \quad s = 1 .
\end{equation}
The $\mu_{j}$ are motivated by the eigenvalues of the Laplace-Beltrami operator
on $\S{d}$. On $\SO(3)$, we get a similar expression
$\mu_{j} \coloneqq \bigl(\ell(j)\,(\ell(j)+1)\bigr)^{s}$. For 
$\T^{d}$, we may set $\mu_j = \ell(j)^{2}$, where $\ell(j)$ denotes the total
polynomial degree. This choice is motivated by the quadratic dependence of the
eigenvalues of the torus Laplace-Beltrami operator on frequency and assigns
equal weights to all basis functions of the same degree.
With $\widetilde{\vc{c}} = (c_{2},\dots,c_{m})^{\top}$, the regularized version of
\eqref{eq:mls_local_problem} reads  
\begin{equation}
  \label{eq:regularized MLS problem}
  \min_{c_{1} \in \IC,\ \widetilde{\vc{c}} \in \IC^{m-1}}
  \norm{c_{1} \vc v_{1} + \mt{V} \widetilde{\vc{c}} - \vc y_{w}}_{2}^{2}
  + \tau\, \widetilde{\vc{c}}^{H} \mt{L} \mt{D} \mt{L}^{H}\, \widetilde{\vc{c}} ,
  \qquad
  \tau \geq 0 ,
\end{equation}
and the regularized MLS approximation at $x$ equals the first coefficient
of the minimizer, \ie $\sM_{\vc y,\tau}(x) \coloneqq c_{1}$.

\begin{remark}
    We can now see more precisely why regularized MLS should use polynomials
    up to degree $K$ in the local tangent space, instead of $\sG_{K}$ as in 
    \eqref{eq:ansatz_space}. For odd $K$, the original space
    $\sG_K$ on $\S{d}$ and $\SO(3)$ does not contain the constant function.
    Consider $\S{2} = \set{(u,v,z) \in \R^{3}}{u^{2}+v^{2}+z^{2}=1}$ with $K=1$. 
    Then $\sG_1 = \operatorname{span}\{u,v,z\}$ and at the north pole
    $(0,0,1)^{\top}$ we have $u=v=0$, so the coefficient of $z$ determines the
    value of the MLS approximation at this point. However,
    $z = \sqrt{1-u^2-v^2} = 1 + \sO(u^{2}+v^{2})$ agrees with the constant $1$
    only to first order, but also contains quadratic and higher-order terms. If
    we used $z$ in place of $g_1\equiv1$ in the above construction, leaving its
    coefficient unpenalized would leave these nonconstant terms unpenalized as
    well. In particular, constant data need not be reproduced at the center. The
    space $\Pi_K^{\mathrm{tan}}(\sX;x)$ contains $1$ exactly and thus avoids the
    issue.   
  \end{remark}

\paragraph{Choosing the regularization strength.}

We now discuss how to compute the locally adapted regularization strength
$\tau(x)$. Thus let $x \in \sX$ be variable again. First, we choose a target 
amplification $\chi_{T} > 1$. Whenever $\chi = \chi(x) \leq \chi_{T}$, we set
$\tau(x) = 0$ and perform unregularized MLS. Otherwise, we employ locally
adapted regularization with strength $\tau(x)$ such that the center
amplification of the regularized problem equals $\chi_{T}$. 

Let us make this more precise. The regularized MLS problem \eqref{eq:regularized
  MLS problem} replaces $\mt{C}$ in the normal equations by 
$\mt{C}_{\tau} \coloneqq \mt{L}(\mt I + \tau \mt{D})\mt{L}^{H}$, while the first
row and column of $\mt{H}$ remain unchanged. Writing
$\vc d \coloneqq \mt{L}^{-1} \vc b$, the
regularized coupling at $x$ becomes
\begin{equation}
  \label{eq:regularized MLS amplification}
  r_{\tau}(x) \coloneqq \vc b^{H} \mt{C}_{\tau}^{-1} \vc b
  = \sum_{j=1}^{m-1} \frac{\abs{d_{j}}^{2}}{1 + \tau \mu_{j}} ,
  \qquad
  \chi_{\tau}(x) \coloneqq \frac{1}{1 - r_{\tau}(x)}
  = \bigl[\mt{H}_{\tau}^{-1}\bigr]_{1,1} .
\end{equation}
Here $\mt{H}_{\tau}$ denotes $\mt{H}$ with $\mt{C}$ replaced by $\mt{C}_{\tau}$.
For $\chi(x) > \chi_{T}$, we have $r_{0}(x) > 1 - \chi_{T}^{-1}$, and
$r_{\tau}(x)$ decreases continuously and strictly to zero. Thus there exists a
unique $\tau > 0$ with $\chi_{\tau}(x) = \chi_{T}$, which can be computed via
bisection. Let $\tau(x)$ denote this value.   

This choice produces a discontinuous first derivative at the boundary
  between regions with $\chi(x) \leq \chi_{T}$ and regions with
  $\chi(x) > \chi_{T}$. We therefore introduce an additional parameter
  $\chi_{\mathrm{full}} > \chi_{T}$, at which full regularization is applied,
  and construct the actual local regularization strength $\widetilde{\tau}(x)$
  to smoothly transition between the two regions, with 
  \begin{equation*}
    \begin{cases}
      \widetilde{\tau}(x) = 0, & \text{ if } \chi(x) \leq \chi_{T}, \\
      \widetilde{\tau}(x) = \tau(x), & \text{ if
                                       } \chi(x) \geq \chi_{\mathrm{full}} .
    \end{cases} 
  \end{equation*}
  In our implementation, $\widetilde{\tau}(x)$ is $\sC^{3}$ whenever the weight
  function is at least $\sC^{3}$.

\paragraph{Accuracy and cost.}

The error estimate \eqref{eq:lebesgue error bound} relies on MLS reproducing
the local ansatz space at the center, \ie $\sM_{g\vert_X}(x) = g(x)$ for every 
$g\in\Pi_K^{\mathrm{tan}}(\sX;x)$. Whenever $\tau(x)=0$, this remains unchanged,
but $\tau(x) > 0$ only guarantees reproduction of constants, so approximation
order $K+1$ is no longer guaranteed. Moreover, for $\tau > 0$, we replace the
Lebesgue constant in \eqref{eq:lebesgue error bound} by its upper bound
$\sqrt{\chi_{\tau}(x)}$. Thus, overall, we end up with 
\begin{equation}
  \label{eq:regularized MLS error}
  \abs{f(x) - \sM_{f\vert_{X},\tau}(x)}
  \leq \bigl(1 + \sqrt{\chi_{\tau}(x)}\bigr)
  \inf_{c \in \IC} \norm{f - c}_{B_{\delta}(x),\infty} 
\end{equation}
with $\tau=\widetilde{\tau}(x)$ for the smoothed choice. 
As $\tau \to \infty$, the reconstruction tends to the local weighted mean 
$\sum_{i \in I} w_i y_i$, \ie to Shepard's method~\cite{Shepard1968}.

The amplification $\chi$ and the contributions $\abs{d_j}^{2}$ require the Cholesky
factorization \eqref{eq:mls local Cholesky factor} and one triangular solve,
at a cost of $\complexity{m^3}$ and $\complexity{m^2}$, respectively. Each
bisection step costs $\complexity{m}$ operations. For a fixed bisection
tolerance, the regularization therefore has the same asymptotic cost as the
local solve.

\subsection{Error Analysis}
\label{sec:hamls error}

The upcoming theorem states that for good data, the proposed HAMLS algorithm
yields an error close to that of the harmonic approximation obtained by applying
the quadrature rule to exact values on the quadrature grid, and the gap vanishes
quickly as the fill distance decreases. This is not very surprising and the
proof is quite simple, but nevertheless its assertion provides an insightful
reference.    

We introduce a new notation for the theorem. Let $f\in\sC(\sX)$. For bandwidth
$L$ and a quadrature grid $Q=\set{q_j}{}_{j=1}^{M_Q}$ with weights $\omega_j>0$
that is exact on $\Pi_{2L}(\sX)$, we define     
\begin{equation}
  \label{eq:quadrature_based_approximation}
  f_{Q} \coloneqq \sum_{k\in\mathcal I_L}
  \left( \sum_{j=1}^{M_Q} \omega_j f(q_j) \overline{\phi_k(q_j)} \right)\phi_k
  \;\in\; \Pi_{L}(\sX).
\end{equation}
This is the harmonic approximation produced by applying the quadrature rule to
the exact values of $f$ on $Q$. It is therefore the HAMLS result in the ideal
case in which step~2 reproduces every $f(q_j)$ exactly and provides a natural
reference for error estimates. Note that $f_Q = f$ whenever
$f \in \Pi_{L}(\sX)$. 
The constants $\rho$ and $h_0$ below play the same roles as in
\Cref{thm:mls_error_bound_good_data}, but correspond here to the local  
ansatz space $\Pi_K^{\mathrm{tan}}(\sX;x)$ and may have different values.

\begin{theorem}[HAMLS with good data]
  \label{thm:hamls_error_bound_good_data}
  Let $\sX \in \set{\T^{d},\S{d},\SO(3)}{d \in \IN}$, let $K \in \IN$ be fixed,
  and use the local ansatz space $\Pi_K^{\mathrm{tan}}(\sX;x)$ with weights $w$ as in
  \Cref{thm:mls_error_bound_good_data}, with $\delta \equiv R\,h_X$ for fixed
  $R \geq \rho$ and quasi-uniformity constant $c \geq 1$. For a fixed harmonic
  bandwidth $L \in \IN$, let $(q_j,\omega_j)_{j=1}^{M_Q}$ be a quadrature
  rule with positive weights that is exact on $\Pi_{2L}(\sX)$.
  Then, for every $f \in \sC^{K+1}(\sX)$ and every node set $X \subset \sX$ with
  \begin{equation*}
    h_X \leq h_0
    \qquad\text{and}\qquad
    \frac{h_X}{q_X} \leq c ,
  \end{equation*}
  the approximation $f_{\mathrm{HAMLS}}$ computed from the exact samples
  $\set{(x_i,f(x_i))}{}_{x_i\in X}$ using unregularized MLS satisfies
  \begin{equation*}
    \norm{f-f_{\mathrm{HAMLS}}}_{\L2(\sX)}
    \leq \norm{f - f_{Q}}_{\L2(\sX)} + C(\sX, K, \phi, c, R, f) \cdot h_{X}^{K+1} .
  \end{equation*}
  The constant $C(\sX,K,\phi,c,R,f)$ is independent of $L$ and the quadrature rule.
\end{theorem}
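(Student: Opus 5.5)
The key observation is that HAMLS with exact samples is exactly the quadrature-based approximation of the MLS reconstruction. Writing $g \coloneqq \sM_{f\vert_X}$, steps~2 and~3 of \Cref{alg:HAMLS} produce $f_{\mathrm{HAMLS}} = g_Q$ in the notation of \eqref{eq:quadrature_based_approximation}. The map $f \mapsto f_Q$ is linear on $\sC(\sX)$, so the triangle inequality gives
\begin{equation*}
  \norm{f - f_{\mathrm{HAMLS}}}_{\L2(\sX)}
  \leq \norm{f - f_Q}_{\L2(\sX)} + \norm{(f - g)_Q}_{\L2(\sX)} .
\end{equation*}
It therefore remains to bound $\norm{e_Q}_{\L2(\sX)}$ for the error function $e \coloneqq f - g$, which is continuous (or at least bounded on $Q$).

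For this, I use the orthonormality of $\{\phi_k\}$ to write
\begin{equation*}
  \norm{e_Q}_{\L2}^2 = \sum_{k\in\mathcal I_L} \Bigl|\sum_j \omega_j e(q_j)\overline{\phi_k(q_j)}\Bigr|^2 .
\end{equation*}
This equals the squared norm of the projection of the vector $(e(q_j))_j$ onto the span of the vectors $(\phi_k(q_j))_j$ in the weighted space $\ell^2_\omega(Q)$. Exactness of the quadrature on $\Pi_{2L}(\sX)$ applied to the products $\phi_k\overline{\phi_{k'}}$ shows that these vectors are orthonormal in $\ell^2_\omega(Q)$, since $\mt F^H\mt W\mt F=\mt I$. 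Bessel's inequality then yields
\begin{equation*}
  \norm{e_Q}_{\L2}^2 \leq \sum_j \omega_j |e(q_j)|^2 \leq \norm{e}_{\sX,\infty}^2 \sum_j \omega_j .
\end{equation*}
Since the weights are positive and the rule integrates the constant function exactly, $\sum_j\omega_j = \mathrm{vol}(\sX)$, so $\norm{e_Q}_{\L2} \leq \mathrm{vol}(\sX)^{1/2}\norm{e}_{\sX,\infty}$. This bound is independent of $L$ and of the particular quadrature rule, which is exactly why the final constant does not depend on them.

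Finally, I invoke the global estimate \eqref{eq:mls_error_bound_global} of \Cref{thm:mls_error_bound_good_data}, in its version for the ansatz space $\Pi_K^{\mathrm{tan}}(\sX;x)$ as stated after the definition of that space. Under $h_X\le h_0$ and $h_X/q_X\le c$ it gives $\norm{f-g}_{\sX,\infty}\le C(f)\,C\,h_X^{K+1}$. The combined constant $\mathrm{vol}(\sX)^{1/2}C(f)C$ depends only on $\sX,K,\phi,c,R,f$.

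The only step needing care is the Bessel argument. It requires exactness on $\Pi_{2L}(\sX)$ rather than just $\Pi_L(\sX)$, and positivity of the weights so that $\ell^2_\omega$ is a genuine inner product. Both are assumed, so I expect no real obstacle. One small point is checking that the MLS reconstruction $g$ is well-defined at every quadrature node, which is also guaranteed by \Cref{thm:mls_error_bound_good_data}.
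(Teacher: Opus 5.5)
Your proposal is correct and follows essentially the same route as the paper: insert $f_Q$ and use the triangle inequality, then apply Parseval together with $\mt F^H\mt W\mt F=\mt I$ from exactness on $\Pi_{2L}(\sX)$, then $\sum_j\omega_j=\mathrm{vol}(\sX)$, then the global MLS bound of \Cref{thm:mls_error_bound_good_data} for $\Pi_K^{\mathrm{tan}}(\sX;x)$. Your Bessel inequality in $\ell^2_\omega(Q)$ is the same as the paper's bound $\norm{\mt F^H\mt W^{1/2}}_{2\to2}\le 1$, which the paper uses without stating it explicitly.
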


\begin{proof}
  First of all, note that \Cref{thm:mls_error_bound_good_data} remains
    true for the locally adapted ansatz space $\Pi_{K}^{\mathrm{tan}}(\sX;x)$,
    with different values for the constants involved \cite{Hielscher2024}.
  Inserting $f_Q$ and applying the triangle inequality yields 
  \begin{equation*}
    \norm{f - f_{\mathrm{HAMLS}}}_{\L2(\sX)}
    \leq \norm{f - f_Q}_{\L2(\sX)} + \norm{f_Q - f_{\mathrm{HAMLS}}}_{\L2(\sX)} .
  \end{equation*}
  It therefore remains to bound the second term. Let $\vc y\coloneqq f\vert_X$ 
  be exact samples and let $\sM_{\vc y}$ be the corresponding MLS approximation. 
  We denote by 
  \begin{equation*}
    f\big\vert_Q \coloneqq (f(q_j))_{j=1}^{M_Q} , \qquad
    \sM_{\vc y}\big\vert_Q \coloneqq (\sM_{\vc y}(q_j))_{j=1}^{M_Q}
  \end{equation*}
  the vectors of values of $f$ and $\sM_{\vc{y}}$ on $Q$, respectively. The 
  harmonic coefficient vectors of $f_Q$ and $f_{\mathrm{HAMLS}}$ are then 
  $\mt F^H\mt W f\big\vert_Q$ and $\mt F^H\mt W\sM_{\vc y}\big\vert_Q$. Since
  both $f_Q$ and $f_{\mathrm{HAMLS}}$ belong to $\Pi_L(\sX)$ and the harmonic
  basis is orthonormal in $\L2(\sX)$, Parseval's identity yields  
  \begin{equation*}
      \norm{f_Q - f_{\mathrm{HAMLS}}}_{\L2(\sX)}
      = \norm{\mt F^H\mt W\bigl(f\big\vert_Q - \sM_{\vc y}\big\vert_Q\bigr)}_2 
      \leq \norm{\mt F^H\mt W^{1/2}}_{2\to2} \cdot 
      \norm{\mt W^{1/2}\bigl(f\big\vert_Q - \sM_{\vc y}\big\vert_Q\bigr)}_2 . 
  \end{equation*}
  Here we also used that the quadrature rule is exact on $\Pi_{2L}(\sX)$, and 
  thus $\mt F^H\mt W\mt F=\mt I$. Together with
  $\sum_{j=1}^{M_Q}\omega_j=\mathrm{vol}(\sX)$, we finally obtain  
  \begin{equation*}
    \begin{aligned}
      \norm{\mt W^{1/2}\bigl(f\big\vert_Q - \sM_{\vc y}\big\vert_Q\bigr)}_2
      &= \left(\sum_{j=1}^{M_Q}\omega_j
      \abs{f(q_j)-\sM_{\vc y}(q_j)}^2\right)^{\nicefrac 12} \\
      &\leq \sqrt{\mathrm{vol}(\sX)}\,\norm{f-\sM_{\vc y}}_{\sX,\infty} \\
      &\leq \sqrt{\mathrm{vol}(\sX)}\,C(f)\,C(\sX,K,\phi,c,R)\,h_X^{K+1} ,
    \end{aligned}
  \end{equation*}
  where the last inequality follows from \Cref{thm:mls_error_bound_good_data}.
  This proves the claim. 
\end{proof}

The first term is the error of the harmonic approximation from exact grid
values and vanishes as $L\to\infty$ for sufficiently smooth $f$. The MLS
error is bounded by a term of order $h_X^{K+1}$, independently of $L$. 
Increasing the bandwidth therefore does not improve this bound for a fixed node
set. For fixed $L$ and $h_X\to0$ under the conditions of
\Cref{thm:mls_error_bound_good_data}, the HAMLS approximation approaches $f_Q$.

\section{Numerical Experiments}
\label{sec:numerics}

We evaluate HAMLS on $\S{2}$ against the LSQR solution of the
regularized problem~\eqref{eq:LSQR-Problem_Regularized}, referred to below as
the LSQR approximation~\cite{Paige1982}.

The implementations rely on FFTW 3.3.10~\cite{Frigo2021}, NFFT
3.5.3~\cite{Keiner2009}, and MTEX 7.1.0~\cite{Hielscher2007}. 
They are parallelized and make use of multi-core architectures.
All experiments were conducted on a $3.8\,\mathrm{GHz}$ AMD Ryzen$^{\text{TM}}$
7 5800X CPU with 8 cores and $128\,\mathrm{GB}$ of RAM, using double-precision 
arithmetic. The code and data are available at
\url{https://github.com/mtex-toolbox/mtex-paper/tree/master/HarmonicApproximationViaMovingLeastSquares}.  

Both methods use the Voronoi areas of the sampling nodes: as
least-squares weights in LSQR and to rescale the local MLS weights in HAMLS.
Unless stated otherwise, reported total runtimes include preprocessing,
in particular the computation of these weights. The time budgets in 
\Cref{fig:Error_x*Time} exclude this common Voronoi cost.

In this paper, we compute all spherical Fourier transforms by using the fast NFSFT algorithm~\cite{Potts1998,Kunis2003,Potts2003} from the NFFT toolbox.
In this implementation, we set the threshold parameter controlling the stability of the fast polynomial transform (FPT) to $\kappa=1000$.
For the associated NFFT, we use an oversampling factor of $2$, a cut-off parameter of $6$, and the Kaiser-Bessel window function.

The HAMLS algorithm on $\SO(3)$ is also implemented in MTEX, and a corresponding
rotational approximation example is described in a paper in preparation by
Dittes et al.

\subsection{Weather Data on \texorpdfstring{$\S{2}$}{S²}}\label{sec:weather data}

Our first, qualitative example uses the $N=6501$ daily mean temperatures
recorded by the Global Historical Climatology Network~\cite{Menne2012} on
Fourier's 250th birthday, see \Cref{fig:S2WeatherData}. The
stations cluster over the continents and leave large ocean regions almost
unsampled, so that the node set is strongly ill-distributed, with fill distance
$h_X \approx 29.7^\circ$. We will see in
\Cref{fig:S2WeatherIterations} that this indeed causes the expected long runtime 
of the iterative LSQR solver, due to the bad conditioning of the system matrix.   

The HAMLS algorithm has a fixed running time once its parameters are chosen,
while both the error and the runtime of the LSQR approximation depend on the
number of iterations and hence on the stopping criterion. In the example shown
in \Cref{fig:S2Weather}, HAMLS takes $8.8$ seconds on our system while the LSQR
approximation is being iterated, with the time budget specified as (d) the same
amount of time, (e) five times that amount, (f) ten times that amount, and (g)
thirty times that amount. The first two of these fits are
visually far from the minimizer of~\eqref{eq:LSQR-Problem_Regularized}.

\newcommand{\weatherPanel}[1]{%
  \includegraphics[width=\linewidth]{#1}}

\begin{figure}[htbp]
  \centering
  \begin{minipage}[c]{0.92\textwidth}
    \centering
    \begin{subfigure}[t]{0.31\textwidth}
      \centering
      \weatherPanel{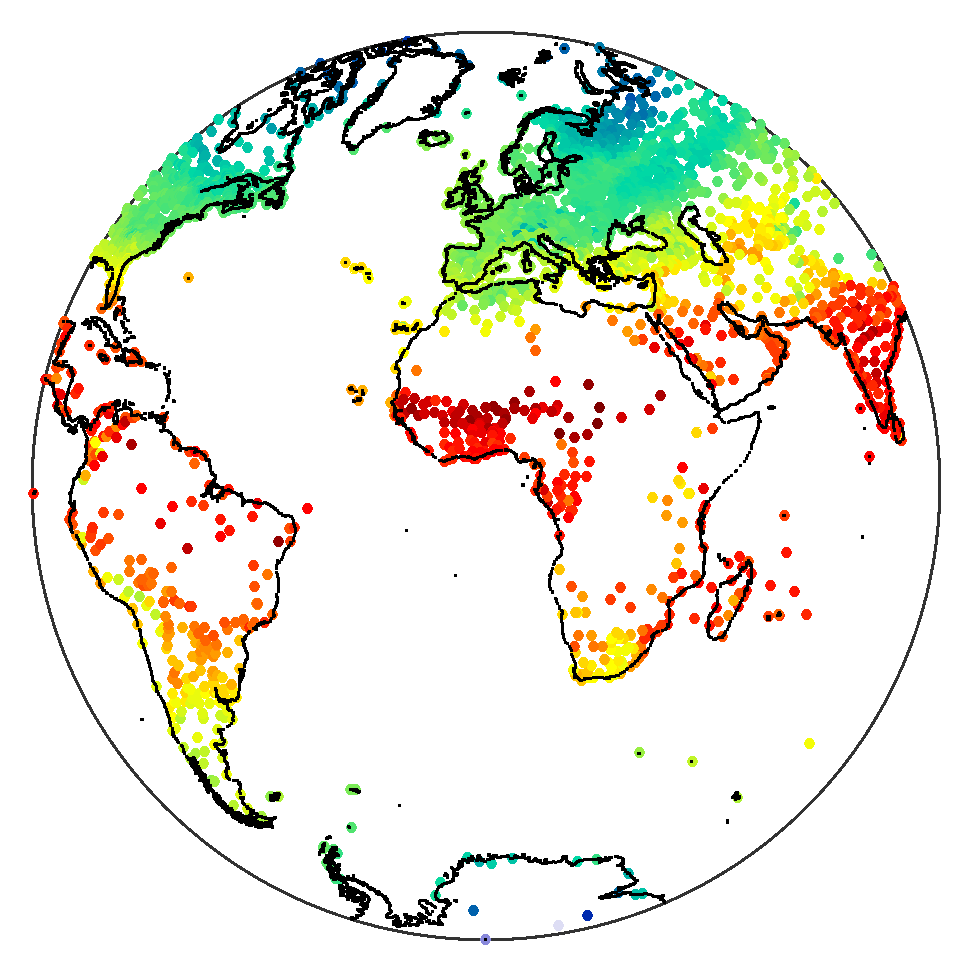}
      \subcaption{Weather data set}\label{fig:S2WeatherData}
    \end{subfigure}
    \begin{subfigure}[t]{0.31\textwidth}
      \centering
      \weatherPanel{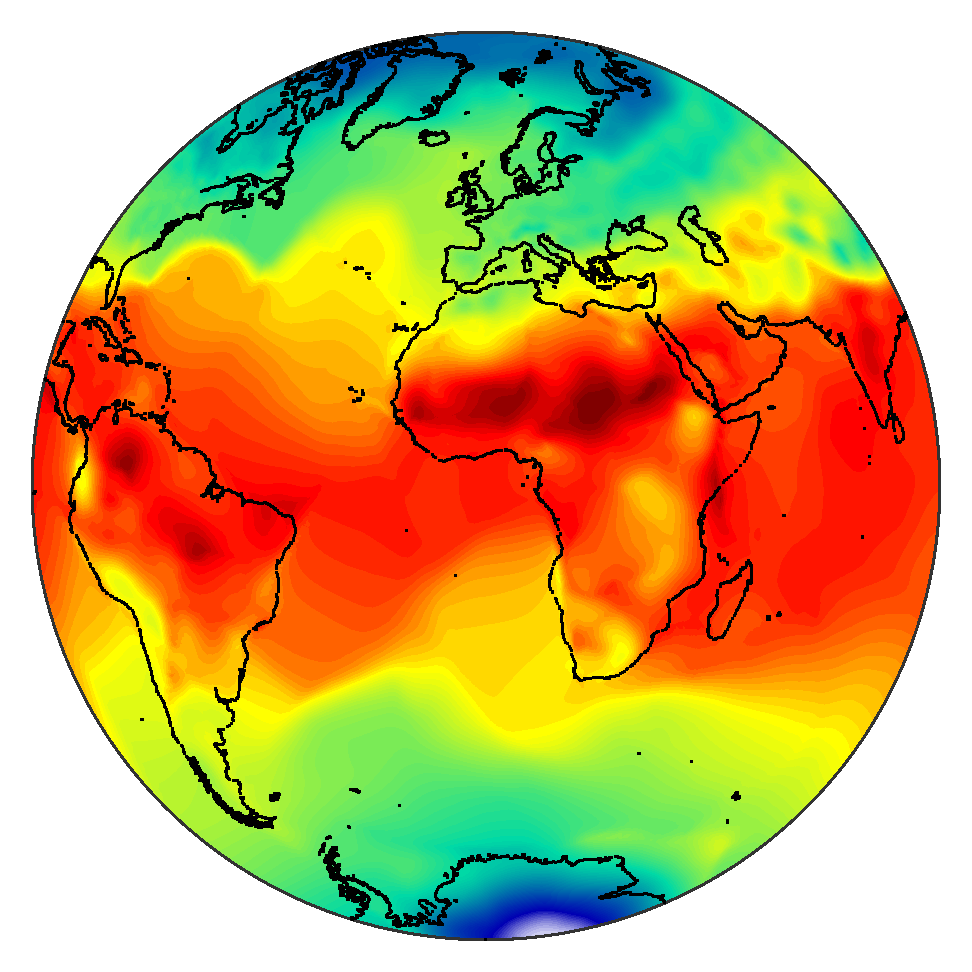}
      \subcaption{MLS reconstruction}\label{fig:S2WeatherMLS}
    \end{subfigure}
    \begin{subfigure}[t]{0.31\textwidth}
      \centering
      \weatherPanel{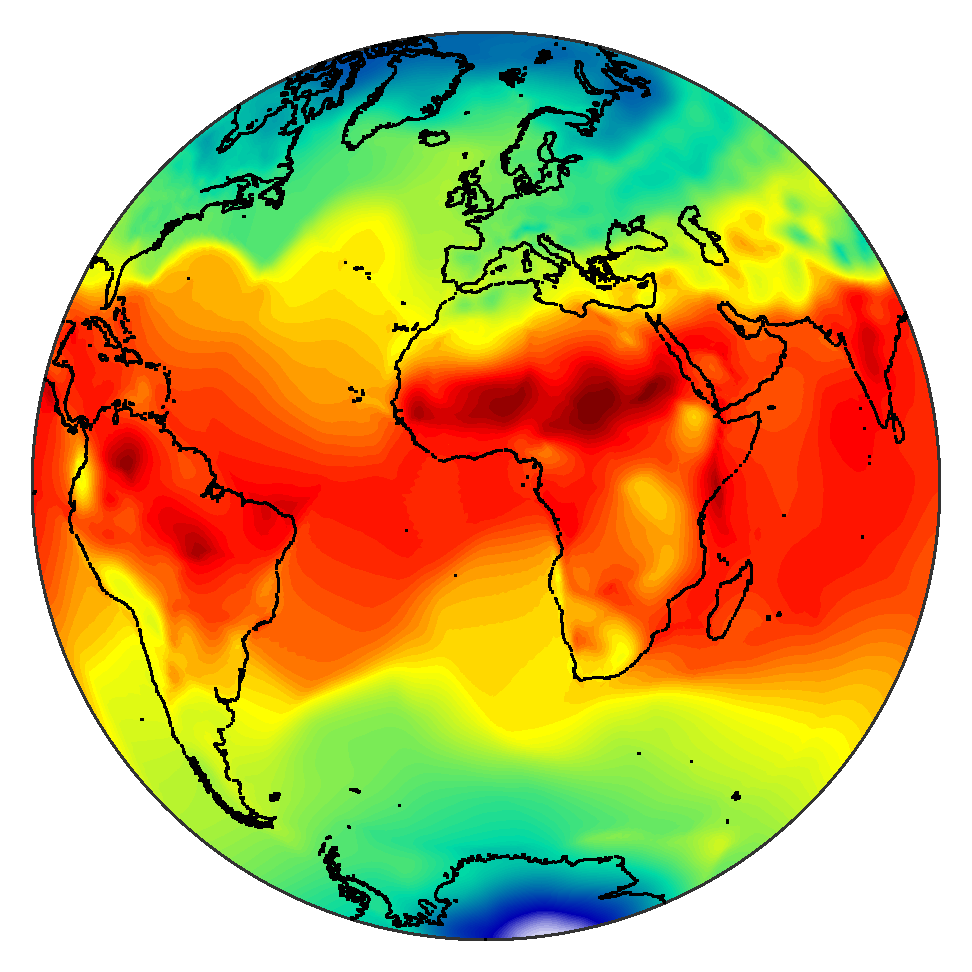}
      \subcaption{HAMLS approximation}\label{fig:S2WeatherHAMLS}
    \end{subfigure}\\[1em]
    \begin{subfigure}[t]{0.2325\textwidth}
      \centering
      \weatherPanel{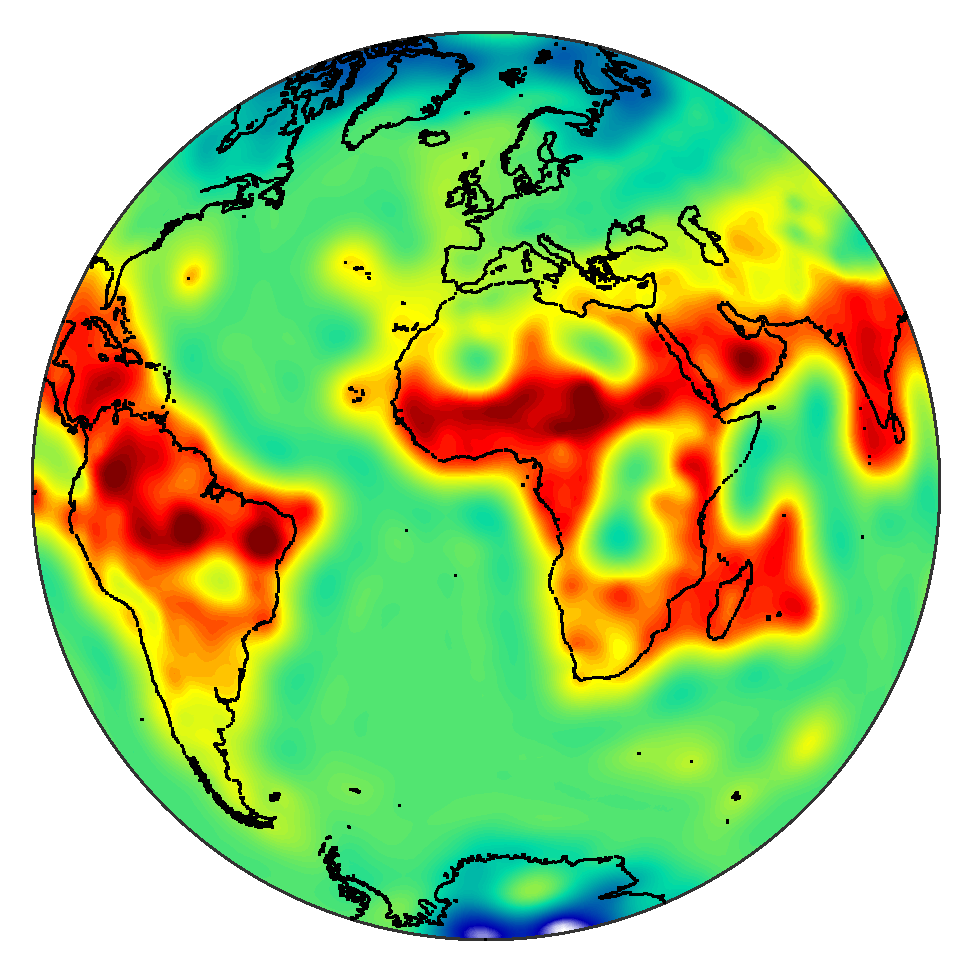}
      \subcaption{LSQR ($280$ iter.)}
    \end{subfigure}
    \begin{subfigure}[t]{0.2325\textwidth}
      \centering
      \weatherPanel{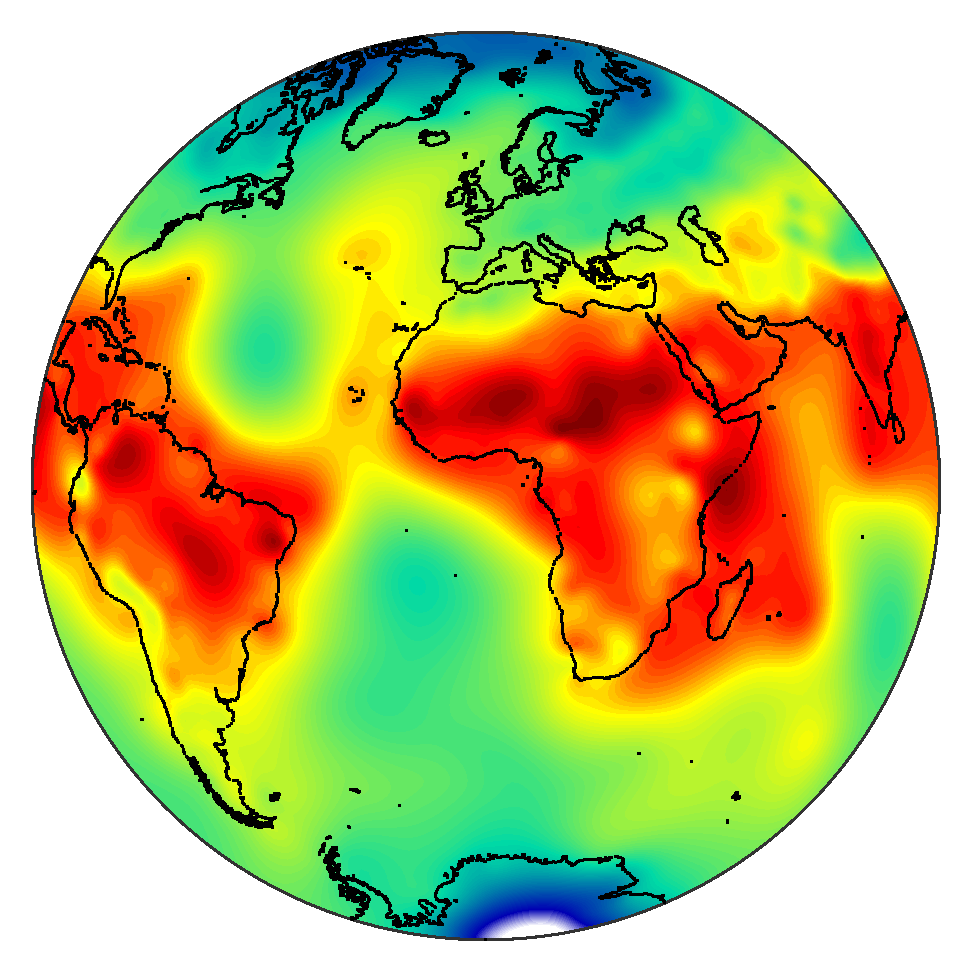}
      \subcaption{LSQR ($1410$ iter.)}
    \end{subfigure}
    \begin{subfigure}[t]{0.2325\textwidth}
      \centering
      \weatherPanel{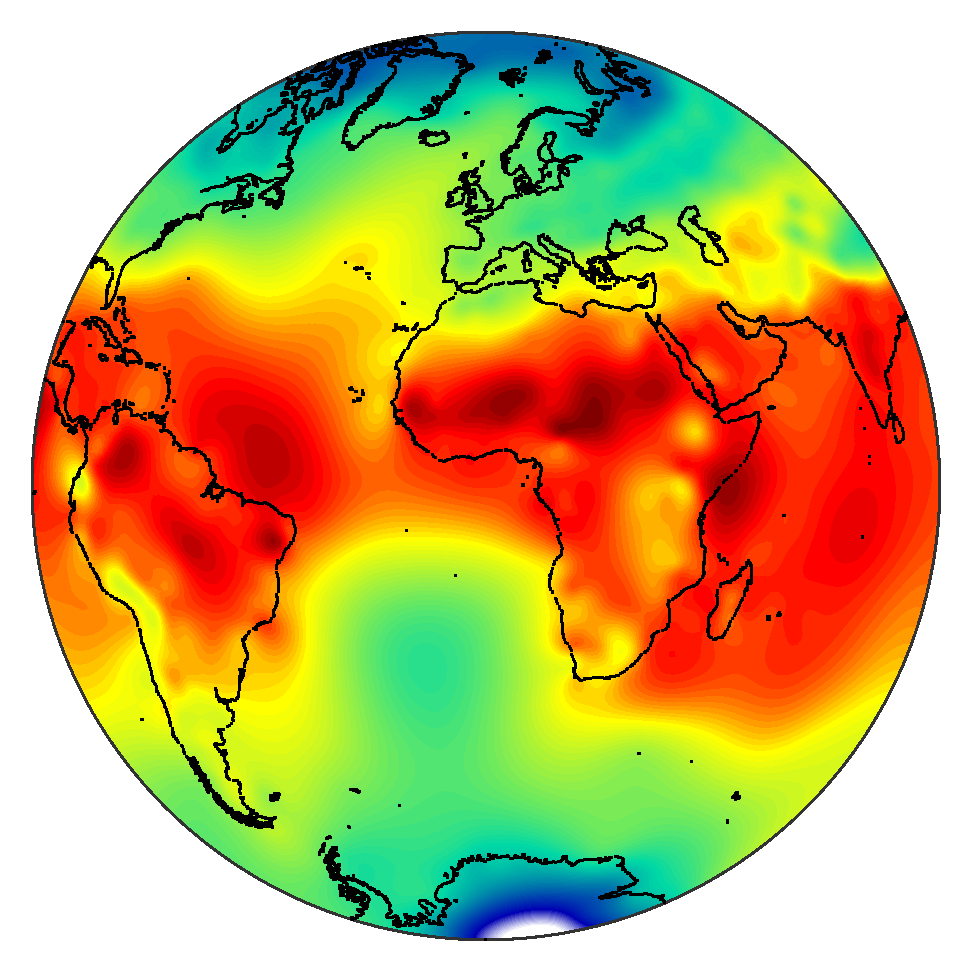}
      \subcaption{LSQR ($2823$ iter.)}
    \end{subfigure}
    \begin{subfigure}[t]{0.2325\textwidth}
      \centering
      \weatherPanel{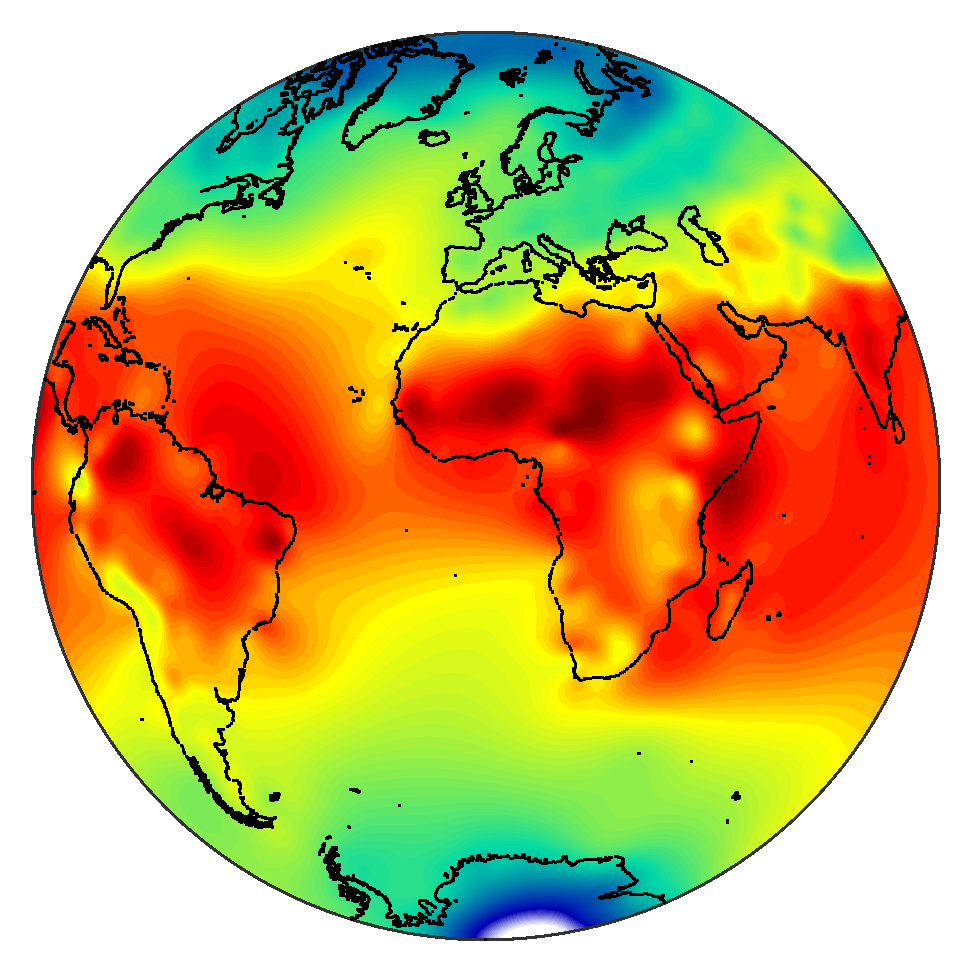}
      \subcaption{LSQR ($8475$ iter.)}\label{fig:S2WeatherLSQR}
    \end{subfigure}
  \end{minipage}
  \hfill
  \begin{minipage}[c]{0.07\textwidth}
    \centering
    \includegraphics[height=9cm, trim={600bp 195bp 12bp 0bp}, clip]{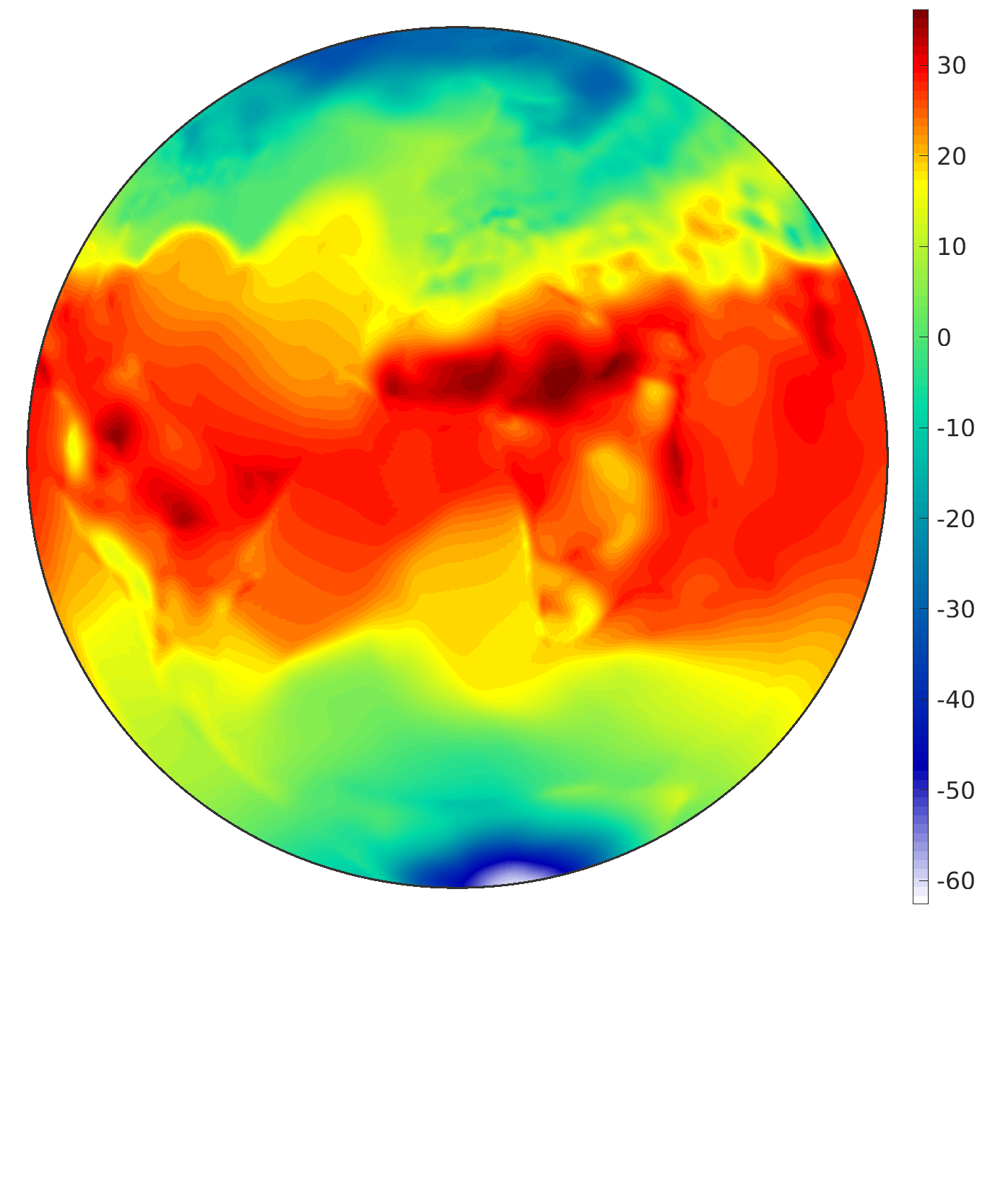}
  \end{minipage}
  \caption{Stereographic projection of the hemisphere centered on the prime
    meridian, showing the weather data~\cite{Menne2012}, the MLS reconstruction of
    degree $4$ with target neighbor count $n = 60$, and
    its HAMLS expansion of bandwidth $L=256$.
    This is compared with the LSQR approximations of bandwidth $L=256$, obtained by
    minimizing the functional~\eqref{eq:LSQR-Problem_Regularized},
    weighted by the Voronoi areas, with regularization parameter
    $\alpha=6.3\cdot10^{-9}$ and Sobolev index $2$. 
    The four LSQR fits take $8.9$, $45.8$, $89.1$ and $267.7$ seconds.
    All panels share the color scale on the right, in
    ${}^\circ\mathrm{C}$.}\label{fig:S2Weather} 
\end{figure}

The LSQR approximation plots show that a large number of LSQR iterations is required to obtain a good solution.
For a closer examination, \Cref{fig:S2WeatherIterations} displays the evolution of the two terms of the minimization functional~\eqref{eq:LSQR-Problem_Regularized} for different regularization parameters.
The penalty term continues to decrease over thousands of iterations, even though the data misfit has settled after a few hundred.
Hence, the extra runtime is spent not on improving the fit at the stations, but on removing the roughness that the early iterates leave over the unsampled oceans.

\begin{figure}[htbp]
  \centering
  \ref{iterlegend}\par\medskip
  \begin{subfigure}[t]{0.49\textwidth}
  \begin{tikzpicture}
    \begin{axis}[
      name = mainaxis,
      width=\textwidth,
      height=0.65\textwidth,
      xmode=log, ymode=log, log basis y={2},
      xmin=9, xmax=9000,
      ymin=0.2, ymax=20,
      ytick={0.25,0.5,1,2,4,8,16},
      yticklabels={0.25,0.5,1,2,4,8,16},
      minor tick style={draw=none},
      ymajorgrids,
      xmajorgrids,
      font={\scriptsize},
      legend to name=iterlegend,
      legend columns=4,
      legend style={draw=black, font=\scriptsize,
        /tikz/every even column/.append style={column sep=0.4cm}},
      xlabel={LSQR iterations},
      ylabel={Data misfit},
      ]

      \addplot[color=blue, solid, mark=*, mark size=1.5pt] table
        [x=iterations, y=misfit, col sep=space]
        {code/weather/results/IterationStudy_reg_1em07.txt};
      \addlegendentry{$\alpha=10^{-7}$}

      \addplot[color=Green, solid, mark=triangle*, mark size=2pt] table
        [x=iterations, y=misfit, col sep=space]
        {code/weather/results/IterationStudy_reg_1em08.txt};
      \addlegendentry{$\alpha=10^{-8}$}

      \addplot[color=orange, solid, mark=square*, mark size=1.5pt] table
        [x=iterations, y=misfit, col sep=space]
        {code/weather/results/IterationStudy_reg_1em09.txt};
      \addlegendentry{$\alpha=10^{-9}$}

      \addplot[color=red, solid, mark=star, mark size=2.5pt] table
        [x=iterations, y=misfit, col sep=space]
        {code/weather/results/IterationStudy_reg_1em10.txt};
      \addlegendentry{$\alpha=10^{-10}$}
    \end{axis}

    \begin{axis}[
      at={(mainaxis.south west)}, anchor=south west,
      width=\textwidth,
      height=0.65\textwidth,
      axis x line*=top,
      axis y line=none,
      xmode=log, ymode=log,
      xmin=9, xmax=9000,
      ymin=0.2, ymax=20,
      xtick={32,318,3180},
      xticklabels={$1$,$10$,$100$},
      minor tick style={draw=none},
      font={\scriptsize},
      xlabel={Runtime in seconds},
      ]
      \addplot[draw=none, forget plot] coordinates {(9,0.2)};
    \end{axis}
  \end{tikzpicture}
  \end{subfigure}
  \begin{subfigure}[t]{0.49\textwidth}
  \begin{tikzpicture}
    \begin{axis}[
      name=mainaxis,
      width=\textwidth,
      height=0.65\textwidth,
      xmode=log, ymode=log, log basis y={2},
      xmin=9, xmax=9000,
      ymin=0.2, ymax=20,
      ytick={0.25,0.5,1,2,4,8,16},
      yticklabels={0.25,0.5,1,2,4,8,16},
      minor tick style={draw=none},
      ymajorgrids,
      xmajorgrids,
      font={\scriptsize},
      xlabel={LSQR iterations},
      ylabel={Penalty term},
      ]

      \addplot[color=blue, solid, mark=*, mark size=1.5pt] table
        [x=iterations, y=penalty, col sep=space]
        {code/weather/results/IterationStudy_reg_1em07.txt};

      \addplot[color=Green, solid, mark=triangle*, mark size=2pt] table
        [x=iterations, y=penalty, col sep=space]
        {code/weather/results/IterationStudy_reg_1em08.txt};

      \addplot[color=orange, solid, mark=square*, mark size=1.5pt] table
        [x=iterations, y=penalty, col sep=space]
        {code/weather/results/IterationStudy_reg_1em09.txt};

      \addplot[color=red, solid, mark=star, mark size=2.5pt] table
        [x=iterations, y=penalty, col sep=space]
        {code/weather/results/IterationStudy_reg_1em10.txt};
    \end{axis}

    \begin{axis}[
      at={(mainaxis.south west)}, anchor=south west,
      width=\textwidth,
      height=0.65\textwidth,
      axis x line*=top,
      axis y line=none,
      xmode=log, ymode=log,
      xmin=9, xmax=9000,
      ymin=0.2, ymax=20,
      xtick={32,318,3180},
      xticklabels={$1$,$10$,$100$},
      minor tick style={draw=none},
      font={\scriptsize},
      xlabel={Runtime in seconds},
      ]
      \addplot[draw=none, forget plot] coordinates {(9,0.2)};
    \end{axis}
  \end{tikzpicture}
  \end{subfigure}
  \caption{The two terms of the regularized problem~\eqref{eq:LSQR-Problem_Regularized} for the weather data at bandwidth $L=256$ as a function of the number of LSQR iterations:
    the data misfit $\norm{\mt W^{1/2}(\mt F\hat{\vc f}-\vc y)}_2$ (left) and
    the penalty term
    $\sqrt\alpha\,\norm{\mt{R}^{\nicefrac 12} \hat{\vc f}}_{2}$ (right) for
    different regularization parameters $\alpha$. 
    The weights $\mt W$ are the normalized Voronoi areas of the stations, so that the data misfit corresponds to a weighted root-mean-square error in ${}^\circ\mathrm{C}$.
    Here, $\mt R$ represents the Sobolev regularization with Sobolev index $2$.}\label{fig:S2WeatherIterations}
\end{figure}

\FloatBarrier
\subsection{A Synthetic Example on \texorpdfstring{$\S{2}$}{S²}}
\label{sec:toy example S2}

Since the previous example did not provide a ground truth against which the approximation error could be assessed, we now consider a synthetic test function $f_{0}$, shown in \Cref{fig:S2ToyFunction}.
The test function has a dominant constant harmonic coefficient, while the remaining coefficients are comparatively small.
This results in a strongly uneven distribution of the harmonic coefficients and, together with the ill-distributed sampling nodes, can make their recovery by LSQR numerically challenging.

We draw $N=10^4$ nodes $\{x_{i}\}_{i=1}^{N}$, with $96\,\%$ sampled from a
probability density constructed from a smoothed version of $f_0-300$ and the
remaining $4\,\%$ sampled uniformly on $\S{2}$, as shown in
\Cref{fig:S2ToyNodes}. The nodes therefore concentrate in regions where $f_0$
exhibits structure and are strongly non-uniformly distributed, with a fill
distance of $h_X\approx14.6^\circ$. The uniformly sampled fraction
prevents the gaps from becoming arbitrarily large.

\newlength{\toyPanelSize}
\setlength{\toyPanelSize}{0.38\textwidth}

\begin{figure}[htbp]
  \centering
  \begin{minipage}[c]{0.80\textwidth}
    \centering
    \begin{subfigure}[t]{0.475\linewidth}
      \centering
      \includegraphics[width=\linewidth]{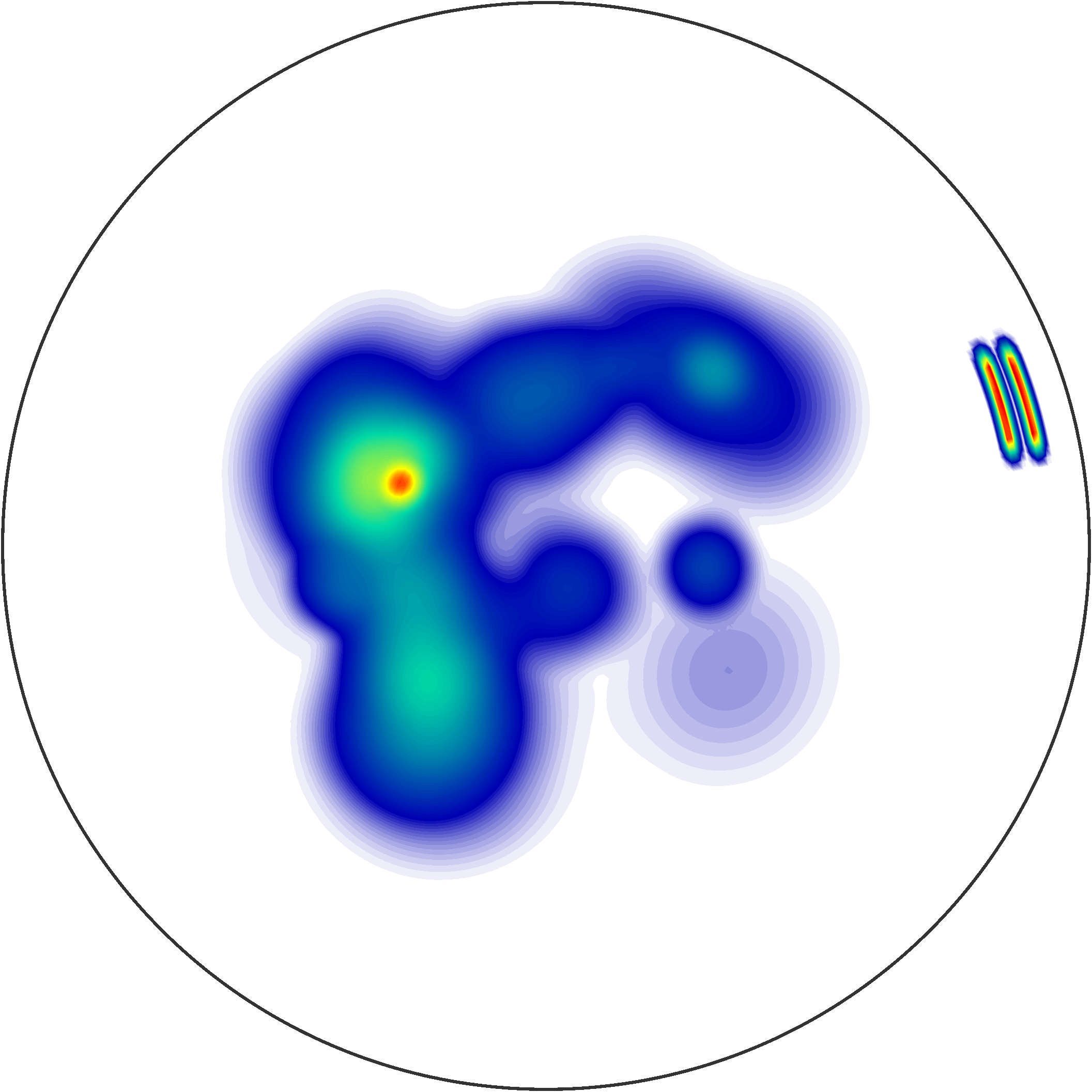}
      \subcaption{Test function $f_{0}$}\label{fig:S2ToyFunction}
    \end{subfigure}
    \hfill
    \begin{subfigure}[t]{0.475\linewidth}
      \centering
      \includegraphics[width=\linewidth]{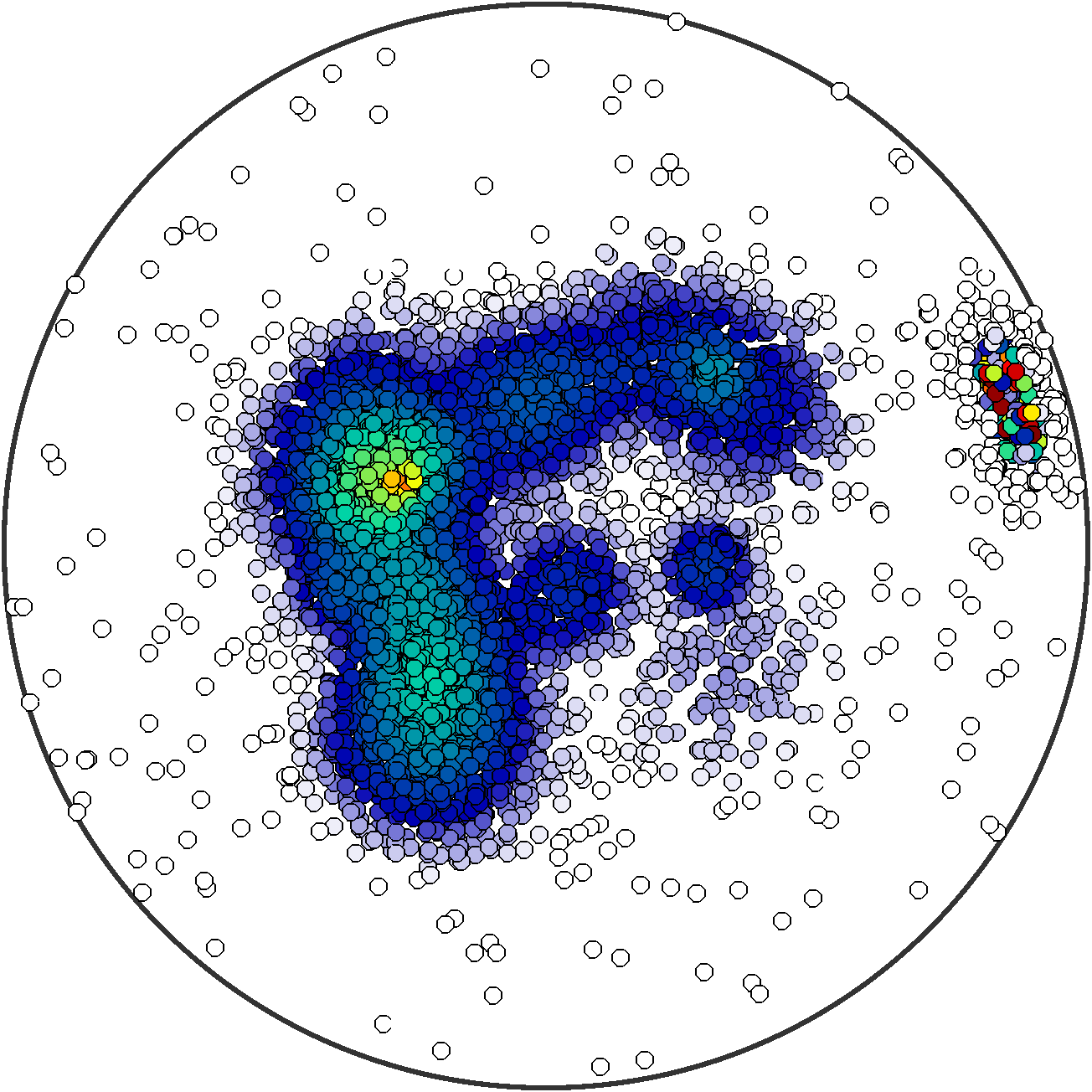}
      \subcaption{The $N=10^{4}$ scattered nodes}\label{fig:S2ToyNodes}
    \end{subfigure}
  \end{minipage}
  \hspace{0.02\textwidth}
  \begin{minipage}[c]{0.10\textwidth}
    \centering
    \includegraphics[height=\toyPanelSize, trim={330bp 0bp 5bp 0bp},clip]{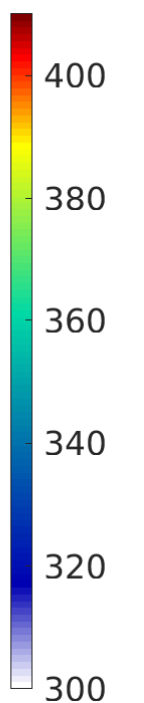}
  \end{minipage}
  \caption{Stereographic projection of the upper hemisphere, showing the
    synthetic test function $f_{0}$ of bandwidth $512$ and the $N=10^{4}$ nodes at which it is sampled.
    The nodes are drawn from a smoothed copy of $f_{0}-300$, up to a uniform fraction of $4\,\%$, and are therefore ill-distributed with fill distance $h_X\approx14.6^\circ$.
    Both panels share the color scale on the right.}\label{fig:S2ToyFunctionData}
\end{figure}

Now, we compute the approximation $\tilde f\in\bandlimFunc{L}{\S{2}}$ from the scattered data $\{(x_{i},f_{0}(x_{i}))\}_{i=1}^{N}$ and measure the relative error
\begin{equation} \label{eq:relative_error_S2}
  E(\tilde f) \coloneqq \frac{\norm{\tilde f-f_{0}}_{\L2(\S{2})}}{\norm{f_{0}}_{\L2(\S{2})}},
\end{equation}
as a function of the reconstruction bandwidth $L$, as illustrated in \Cref{fig:S2TestFunction_TimeError}.

For LSQR, the regularization parameter is chosen optimally for each bandwidth
separately by minimizing $E$.
This heuristic is computationally expensive and, of course, favors the LSQR
approximation error. Moreover, this approach is only possible because the exact
solution is known. Both methods use the Voronoi areas of the nodes: as
quadrature weights in LSQR and within the local MLS weights in HAMLS.

\begin{figure}[htbp]
  \centering
  \begin{tikzpicture}
      \begin{axis}[
        width = 0.7\textwidth,
        height = 0.42\textwidth,
        xmin=4, xmax=128,
        ymin=0.00035, ymax=0.055,
        ymode=log, log basis y={2},
        xtick={16,32,48,64,80,96,112,128},
        ytick={0.0005,0.001,0.002,0.004,0.008,0.016,0.032},
        yticklabels={0.0005,0.001,0.002,0.004,0.008,0.016,0.032},
        minor tick style = {draw=none},
        ymajorgrids,
        xmajorgrids,
        x grid style = solid,
        font = {\scriptsize},
        legend style={text width=2.8cm, align=left,
          fill opacity=1, draw opacity=1, text opacity=1},
        legend pos=south west,
        xlabel={Bandwidth $L$},
        ylabel={Relative error $E(\tilde f)$},
        ]

        \addplot[color=black!55, line width=1.2pt, each nth point=2] table [x=bandwidth,
        y=Projection, col sep=space]
        {code/synthetic/results/errorVsBandwidth/ApproximationError.txt};
        \addlegendentry{$\L2$-projection}

        \addplot[color=blue, solid, line width=1pt, each nth point=2] table [x=bandwidth, y=LSQR,
        col sep=space] {code/synthetic/results/errorVsBandwidth/ApproximationError.txt};
        \addlegendentry{LSQR}

        \addplot[color=Green, dotted, line width=1.2pt, each nth point=2] table [x=bandwidth,
        y=MLS_Harm_deg2, col sep=space]
        {code/synthetic/results/errorVsBandwidth/ApproximationError.txt};
        \addlegendentry{HAMLS with degree 2}

        \addplot[color=orange, dashdotted, thick, each nth point=2] table [x=bandwidth,
        y=MLS_Harm_deg3, col sep=space]
        {code/synthetic/results/errorVsBandwidth/ApproximationError.txt};
        \addlegendentry{HAMLS with degree 3}

        \addplot[color=red, dashed, thick, each nth point=2] table [x=bandwidth,
        y=MLS_Harm_deg4, col sep=space]
        {code/synthetic/results/errorVsBandwidth/ApproximationError.txt};
        \addlegendentry{HAMLS with degree 4}
      \end{axis}
  \end{tikzpicture}
  \caption{Relative $\L2(\S{2})$-error~\eqref{eq:relative_error_S2} of the
    harmonic approximations for varying bandwidth. HAMLS (non-solid) is computed
    with target neighbor count
      $n = 4 \cdot \dim(\Pi_K^{\mathrm{tan}}(\S{2};x))$ and varying polynomial
    degrees $K$. The LSQR approximation (solid blue) of the regularized
    problem~\eqref{eq:LSQR-Problem_Regularized} uses Sobolev index $2$ and an
    optimal regularization parameter and is iterated until its stopping
    criterion is met, yielding a good approximation to the minimizer of
    \eqref{eq:LSQR-Problem_Regularized}. 
    Using a stricter stopping tolerance, and hence a larger time budget,
      the LSQR error decreases slightly further, cf.~\Cref{fig:Error_x*Time}. 
    The solid gray curve shows the truncation error of the $\L2$-orthogonal
    projection of $f_{0}$ onto $\bandlimFunc{L}{\S{2}}$ and serves as a lower
    error bound.}\label{fig:S2TestFunction_TimeError} 
\end{figure}

At bandwidth $L=128$, the HAMLS algorithm requires only $0.7$--$2.1$ seconds.
In contrast, LSQR requires about $70$ seconds and $6908$ iterations to obtain a
similarly accurate approximation. However, both methods yield errors notably
above the lower bound set by the best approximation in $\bandlimFunc{L}{\S{2}}$,
\ie the $\L2(\S{2})$-orthogonal projection $P_L f_0$. 

Furthermore, we observe that for $L\gtrsim100$, the HAMLS errors stagnate at
around $2\cdot10^{-3}$, although the $\L2$-projection error continues to decay.
The onset of stagnation coincides with $(L+1)^{2}>N$, at which point the
  available data, and no longer the bandwidth, limit the accuracy. This
  corresponds to the $L$-independent term in
  \Cref{thm:hamls_error_bound_good_data}. Although the assumptions of this
  theorem are not met here, between $44\,\%$ and $89\,\%$ of the sphere is
  reconstructed without regularization, depending on the polynomial degree, so
  that this term still enters the $\L2$-error through averaging.

We further observe that increasing the polynomial degree from $2$ to $3$
provides no significant improvement in the HAMLS approximation. This behavior is
explained in the following remark.

\begin{samepage}
\begin{remark}
  In \Cref{fig:S2TestFunction_TimeError}, the achieved relative errors for HAMLS
  with MLS degree $K=2$ and $K=3$ are very similar.
  This is not a coincidence.
  Superconvergence one order higher than expected has already been observed and
  explained in \cite{Li2020} for MLS using even polynomial degrees and is
  currently being revisited for regular grids \cite{Durst2026}. It is usually expected
  for uniformly distributed nodes and unregularized MLS. However, the
  $\L2$-error measured here is a form of averaging the errors of MLS over many
  points, and, depending on the degree of MLS, $44\,\%$ to $89\,\%$ of the sphere
  does not receive regularization. This allows us to see this behavior even for
  ill-distributed nodes.   
\end{remark}
\end{samepage}

To fairly compare both methods, we first need to determine an appropriate runtime budget for LSQR.
We initially attempted to use the same runtime budget for both methods, but
LSQR does not provide a meaningful approximation within the runtime required
by HAMLS unless the bandwidth is small.
We therefore examine how the LSQR approximation develops as the available runtime increases.
Specifically, we allow LSQR a runtime of $1,5,10,\ldots,100$ times the corresponding runtime of HAMLS with polynomial degree $4$.
For each runtime budget, the regularization parameter is chosen to minimize the approximation error.
The resulting errors are shown in \Cref{fig:Error_x*Time}.
We see that, given a very large time budget, the LSQR error indeed becomes smaller than the error of HAMLS with the chosen parameters.
However, choosing a higher polynomial degree in HAMLS would yield a better approximation as well.

\begin{figure}[htbp]
  \centering
  \begin{tikzpicture}
      \begin{axis}[
        name=leftaxis,
        width = 0.75\textwidth,
        height = 0.45\textwidth,
        xmin=4, xmax=128,
        ymin=0.0004, ymax=1.2,
        ymode=log, log basis y={10},
        xtick={16,32,48,64,80,96,112,128},
        minor tick style = {draw=none},
        ymajorgrids,
        xmajorgrids,
        every axis plot/.append style={x filter/.expression={mod(x,2)==1 ? x : nan}},
        font = {\scriptsize},
        legend style={at={(1.03,0.85)}, anchor=north west, draw=none,
          cells={anchor=west}, font=\scriptsize},
        legend cell align=left,
        xlabel={Bandwidth $L$},
        ylabel={Relative error $E(\tilde f)$},
        ]

        \addplot[color=blue!0!red, solid, line width=0.9pt] table [x=bandwidth, y=LSQR_x1, col sep=space] {code/synthetic/results/equalTime/LSQRErrorTimeFactors.txt};
        \addlegendentry{LSQR ($1\times$ time)}

        \addplot[color=blue!14!red, solid, line width=0.9pt] table [x=bandwidth, y=LSQR_x5, col sep=space] {code/synthetic/results/equalTime/LSQRErrorTimeFactors.txt};
        \addlegendentry{LSQR ($5\times$ time)}

        \addplot[color=blue!28!red, solid, line width=0.9pt] table [x=bandwidth, y=LSQR_x10, col sep=space] {code/synthetic/results/equalTime/LSQRErrorTimeFactors.txt};
        \addlegendentry{LSQR ($10\times$ time)}

        \addplot[color=blue!42!red, solid, line width=0.9pt] table [x=bandwidth, y=LSQR_x15, col sep=space] {code/synthetic/results/equalTime/LSQRErrorTimeFactors.txt};
        \addlegendentry{LSQR ($15\times$ time)}

        \addplot[color=blue!56!red, solid, line width=0.9pt] table [x=bandwidth, y=LSQR_x20, col sep=space] {code/synthetic/results/equalTime/LSQRErrorTimeFactors.txt};
        \addlegendentry{LSQR ($20\times$ time)}

        \addplot[color=blue!70!red, solid, line width=0.9pt] table [x=bandwidth, y=LSQR_x30, col sep=space] {code/synthetic/results/equalTime/LSQRErrorTimeFactors.txt};
        \addlegendentry{LSQR ($30\times$ time)}

        \addplot[color=blue!84!red, solid, line width=0.9pt] table [x=bandwidth, y=LSQR_x50, col sep=space] {code/synthetic/results/equalTime/LSQRErrorTimeFactors.txt};
        \addlegendentry{LSQR ($50\times$ time)}

        \addplot[color=blue!98!red, solid, line width=0.9pt] table [x=bandwidth, y=LSQR_x100, col sep=space] {code/synthetic/results/equalTime/LSQRErrorTimeFactors.txt};
        \addlegendentry{LSQR ($100\times$ time)}

        \addplot[color=orange, dotted, line width=1.2pt] table [x=bandwidth, y=MLS_Harm_deg4, col sep=space]
        {code/synthetic/results/equalTime/LSQRErrorTimeFactors.txt};
        \addlegendentry{HAMLS (deg. 4)}

        \addplot[color=black!55, line width=1.2pt] table [x=bandwidth, y=Projection, col sep=space]
        {code/synthetic/results/equalTime/LSQRErrorTimeFactors.txt};
        \addlegendentry{$\L2$-projection}

      \end{axis}
  \end{tikzpicture}
  \caption{Relative $\L2(\S{2})$-error~\eqref{eq:relative_error_S2} of the LSQR
    method when granted $1,5,10,\dots,100$ times the runtime of the HAMLS
    approximation (polynomial degree $4$ and target neighbor count $n = 60$), at
    the same bandwidth and with the error-minimizing regularization parameter.
    The runtime budget of each curve determines the number of LSQR iterations
    that fit into the corresponding multiple of the HAMLS runtime. Only odd
    bandwidths are shown: on the machine used, one LSQR iteration costs up to
    twice as much for $L \equiv 4 \ (\mathrm{mod}\ 8)$ as for the neighboring
    bandwidths, so the number of iterations that fit into a budget, and with it
    the error, would oscillate strongly between neighboring bandwidths. The
    remaining spikes have the same cause. The gray curve shows the truncation
    error of the $\L2$-orthogonal projection of $f_{0}$ onto
    $\bandlimFunc{L}{\S{2}}$, which provides a lower bound for every
    band-limited approximation.}\label{fig:Error_x*Time} 
\end{figure}

Finally, as already mentioned in \Cref{sec:quadrature grid}, we compare the
runtimes of steps 2 and 3, namely the MLS evaluation on the quadrature grid and
the quadrature by the adjoint $\S2$-Fourier transform, respectively,
see~\Cref{tab:TimeHAMLSsteps}. The table shows that the MLS evaluation in step 2
is the main contributor to the overall runtime of HAMLS.

\begin{table}[htbp]
  \centering
  \caption{Runtime in seconds of the MLS evaluation (step~2 of HAMLS) for $10^4$, $10^5$ and $10^6$ scattered data points and different polynomial degrees, compared with the runtime of the adjoint spherical Fourier transform used in quadrature (step~3 of HAMLS).}\label{tab:TimeHAMLSsteps}
  \small
  \setlength{\tabcolsep}{9pt}
  \setlength{\aboverulesep}{0pt}
  \setlength{\belowrulesep}{0pt}
  \setlength{\extrarowheight}{2pt}

  \pgfplotstableread[col sep=space]{code/synthetic/results/stepRuntimes/RuntimeMLSQuadrature.txt}\runtimetable

  \pgfplotstabletypeset[
    row predicate/.code={%
      \pgfplotstablegetelem{#1}{bandwidth}\of\runtimetable
      \pgfmathparse{Mod(\pgfplotsretval,16)==0 && \pgfplotsretval>0}%
      \ifdim\pgfmathresult pt<0.5pt \pgfplotstableuserowfalse\fi
    },
    columns={bandwidth,Quadrature,
      MLS_1e4_deg2,MLS_1e5_deg2,MLS_1e6_deg2,
      MLS_1e4_deg4,MLS_1e5_deg4,MLS_1e6_deg4},
    every head row/.style={
      output empty row,
      before row={%
        \toprule
          \multirow{2}{*}{$L$} & \multirow{2}{*}{Quadrature}
          & \multicolumn{3}{c|}{MLS with polynomial degree 2}
          & \multicolumn{3}{c|}{MLS with polynomial degree 4}\\
        \cline{3-8}
          &
          & $N=10^4$ & $N=10^5$ & $N=10^6$
          & $N=10^4$ & $N=10^5$ & $N=10^6$\\
        \midrule},
    },
    every last row/.style={after row=\bottomrule},
    every column/.style={column type=c, fixed, fixed zerofill, precision=3},
    columns/bandwidth/.style={column type={|c|}, int detect},
    columns/Quadrature/.style={column type=c|},
    columns/MLS_1e6_deg2/.style={column type={c|}},
    columns/MLS_1e6_deg4/.style={column type={c|}},
  ]\runtimetable

\end{table}

We see that the MLS evaluation is almost independent of the data size, as $N$
enters only through the nearest-neighbor queries. More precisely, increasing $N$
from $10^4$ to $10^6$ has no significant impact on the running time for
intermediate to large bandwidths.

The cost of HAMLS is therefore governed primarily by the bandwidth and, consequently, by the number of quadrature points $M_Q$.
The dependence of the local problems in \Cref{sec:mls transfer} on the polynomial degree is much milder than the size of these problems might suggest.
Increasing the polynomial degree of MLS from $2$ to $4$ increases $\dim(\Pi_K^{\mathrm{tan}}(\S{2};x))$ from $6$ to $15$, so that the cost of each local least-squares solve grows superlinearly with the problem dimension.
Nevertheless, the measured runtime only increases by a factor that mostly stays
below $3.9$.

\section{Conclusion}
\label{sec:conclusion}

The HAMLS algorithm obtains a global harmonic expansion by combining MLS with a
fast Fourier transform on a quadrature grid. In the MLS step, adapting the
support radius and the regularization to the local node geometry is crucial for 
handling ill-distributed data. At the same time, the global step remains stable
and non-iterative. For exact samples of sufficiently smooth functions on
sufficiently dense quasi-uniform nodes, the $\L2$-error with unregularized MLS
is bounded by the error of the harmonic approximation arising from exact grid
values, plus a term of order $h_X^{K+1}$ independent of the bandwidth. Here $K$
is the polynomial degree employed in the MLS step. The experiments on $\S{2}$
demonstrate the practical benefit: HAMLS achieves errors comparable to those of
LSQR at a fraction of the runtime, with the advantage increasing with bandwidth.

\printbibliography

\end{document}